\renewcommand{\leq}{\leqslant}
\newcommand{\concat}{\overset{\frown}{}}
\newcommand{\rest}{\upharpoonright}
\newcommand{\tuple}[1]{\left\langle#1\right\rangle}
\newcommand{\win}{\uparrow}
\newcommand{\markwin}{\uparrow_{\mathrm{mark}}}
\newcommand{\prewin}{\uparrow_{\mathrm{pre}}}
\newcommand{\notprewin}{\not\uparrow_{\mathrm{pre}}}
\newcommand{\mc}{\mathcal}
\newcommand{\ran}{\operatorname{ran}}
\newcommand{\term}{\textbf}
\theoremstyle{definition}
\newtheorem{theorem}{Theorem}[section]
\newtheorem{definition}[theorem]{Definition}
\newtheorem{lemma}[theorem]{Lemma}
\newtheorem{corollary}[theorem]{Corollary}
\newtheorem{proposition}[theorem]{Proposition}
\newtheorem{conjecture}[theorem]{Conjecture}
\newtheorem{example}[theorem]{Example}
\newtheorem{question}[theorem]{Question}
\title{On strategies for selection games related to
countable dimension}
\author{Christopher Caruvana and Steven Clontz}
\begin{document}

\maketitle

\begin{abstract}
    Two selection games from the literature,
    \(G_c(\mc O,\mc O)\) and \(G_1(\mc O_{zd},\mc O)\),
    are known to characterize countable dimension among certain
    spaces. This paper studies their perfect- and limited-information
    strategies, and investigates issues related
    to non-equivalent characterizations of zero-dimensionality
    for spaces that are not both separable and metrizable.
    To relate results on
    zero-dimensional and finite-dimensional spaces,
    a generalization of Telg\'{a}rsky's
    proof that the point-open and finite-open games
    are equivalent is demonstrated.
\end{abstract}

\section{Introduction}

In the field of topological dimension theory, there are three standard notions of dimension:
the small inductive dimension, the large inductive dimension, and the covering dimension.
Though some spaces, like the disjoint union of \([0,1]^n\) over positive integers \(n\), are ``weakly'' infinite-dimensional,
the Hilbert cube \([0,1]^\omega\) is ``strongly'' infinite-dimensional
\cite[1.8.20]{engelking1978dimension}.
Hurewicz, in \cite{zbMATH02577822},
introduced the notion of countable-dimensional spaces as a first step to characterizing
infinite-dimensional spaces.

\begin{definition}
    A separable metrizable space is said to have
    \term{countable dimension} if it is the countable union of
    zero-dimensional subspaces.
\end{definition}

Since a countable-dimensional space can be written as a countable union of zero-dimensional subspaces,
we will be focusing our attention on these subspaces.
We use the following terminology to disambiguate between two common 
characterizations of zero dimension from the literature.

\begin{definition}
    A space is said to be \term{zero-ind} if it has a basis of clopen sets.
\end{definition}

\begin{definition}
    A space is said to be \term{zero-dim} or \term{0dim} if every open cover admits a refining open cover consisting of pairwise disjoint open sets.
\end{definition}

\begin{proposition}
    Every \(T_1\) \term{zero-dim} space \(X\) is \term{zero-ind}.
\end{proposition}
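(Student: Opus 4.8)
The plan is to establish directly the defining property of \textbf{zero-ind}: for every point \(x \in X\) and every open set \(U\) with \(x \in U\), I will produce a clopen set \(V\) with \(x \in V \subseteq U\). The collection of all clopen subsets of \(X\) is then a basis, so \(X\) is \textbf{zero-ind}.

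First I would invoke the \(T_1\) hypothesis to note that \(\{x\}\) is closed, so \(X \setminus \{x\}\) is open; hence \(\{U,\ X \setminus \{x\}\}\) is an open cover of \(X\). Applying the \textbf{zero-dim} hypothesis to this cover yields a pairwise disjoint open refinement \(\mathcal{V}\) that covers \(X\). Let \(V\) be the member of \(\mathcal{V}\) containing \(x\); it exists because \(\mathcal{V}\) covers \(X\), and it is unique because the members of \(\mathcal{V}\) are pairwise disjoint. Since \(V\) refines \(\{U,\ X\setminus\{x\}\}\) and \(x \in V\), the set \(V\) cannot be contained in \(X \setminus \{x\}\), so \(V \subseteq U\). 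Finally, since \(\mathcal{V}\) is a pairwise disjoint cover, \(X \setminus V = \bigcup(\mathcal{V} \setminus \{V\})\), which is open as a union of open sets; therefore \(V\) is closed as well, i.e.\ clopen.

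There is no real obstacle in this argument; it is a short manipulation once the right cover is chosen. The two points carrying the weight are the use of \(T_1\) to guarantee that the punctured space \(X \setminus \{x\}\) is open (so it may be adjoined to \(U\) to form a legitimate open cover), and the observation that pairwise disjointness of the refinement is exactly what upgrades the chosen open set \(V\) to a clopen one, since its complement is then a union of the remaining refinement members.
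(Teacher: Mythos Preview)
Your proof is correct and follows the same approach as the paper: form the cover \(\{U,\ X\setminus\{x\}\}\), refine it to a pairwise disjoint open cover, and take the member containing \(x\). You simply spell out in more detail why that member is clopen and contained in \(U\), which the paper leaves implicit.
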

\begin{proof}
    Let \(U\) be an open neighborhood of \(x\) and consider the open
    cover \(\{U,X\setminus\{x\}\}\). A pairwise disjoint open refinement
    covering \(X\) includes a clopen subset of \(U\) containing \(x\).
\end{proof}

The ``ind'' above abbreviates ``inductive'' as this is the usual definition
of zero inductive dimension; we use just ``dim'' for covering dimension
to follow historical precedent. There is also a large inductive
(``Ind'') dimension defined in terms of closed subsets rather than
points;
in the context of normal spaces, covering dimension zero and this
large inductive dimension zero coincide
\cite[1.6.11]{engelking1978dimension};
we will not consider it further.
In the further restricted context of separable metrizable spaces, all three 
notions of dimension coincide \cite[1.7.7]{engelking1978dimension}.
However, in the general context of metrizable spaces, the small inductive 
dimension can differ from the covering dimension,
the first example of such a space coming from \cite{RoyPrabir1962}.
The natural question then is how different can the two notions be?
This general question of dimension spread has enjoyed steady progress
\cite{Kulesza1990,Kulesza2005,Mrowka1997,Mrowka2000,OSTASZEWSKI199095}.

We will be interested in the following generalization of
\(\sigma\)-compactness.

\begin{definition}
    Let \(\mc A\) be a collection (resp. property) of subsets of a space \(X\).
    Then \(X\) is said to be \term{\(\sigma\)-\(\mc A\)} provided there
    exists a countable collection \(\{A_n:n<\omega\}\) of sets in \(\mc A\) 
    (resp. sets satisfying \(\mc A\)) where \(X=\bigcup_{n<\omega} A_n\).
\end{definition}

\begin{question}
When is \(\sigma\)-zero-ind equivalent to \(\sigma\)-zero-dim?
\end{question}

Any answer to the following question is also an answer to the former.

\begin{question}
When does zero-ind imply \(\sigma\)-zero-dim?
\end{question}

As noted earlier, zero-ind and zero-dim (and therefore \(\sigma\)-zero-ind
and \(\sigma\)-zero-dim) are equivalent for separable metrizable spaces.
This assumption may be weakened by considering the following property.

\begin{definition}
A space is said to be
\term{strongly paracompact} if every open cover of
the space has a star-finite open refinement that covers the space,
that is, an open refinement covering the space such that each member
of the refinement meets only finitely-many other members.
\end{definition}

From the definition, one observes that every strongly paracompact
space is paracompact (every open cover has a locally-finite refinement
that's also a cover). It may also be shown
\cite[Corollary 5.3.11]{MR1039321} that every Lindel\"of \(T_3\) space,
such as a separable metrizable space, is strongly paracompact.

\begin{theorem}[{\cite[3.1.30]{engelking1978dimension}}]
Let \(X\) be strongly paracompact and \(T_2\).
Then \(X\) is zero-ind if and only if it is zero-dim.
\end{theorem}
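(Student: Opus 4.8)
The plan is to prove the two implications separately. For ``zero-dim $\Rightarrow$ zero-ind'' there is nothing new to do: since $T_2$ implies $T_1$, this is precisely the Proposition proved above (and strong paracompactness is not needed for it). So essentially all the work goes into the reverse implication: a strongly paracompact zero-ind space $X$ is zero-dim ($T_2$ is not used in this direction).

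Fix an arbitrary open cover $\mathcal{U}$ of $X$; the goal is to produce a pairwise disjoint open refinement of $\mathcal{U}$ covering $X$. The first move is to pass to a clopen cover: using the clopen basis, let $\mathcal{V}$ be the cover of $X$ consisting of all clopen sets contained in some member of $\mathcal{U}$, so $\mathcal{V}$ refines $\mathcal{U}$. Then invoke strong paracompactness to obtain a star-finite open refinement $\mathcal{W}$ of $\mathcal{V}$ covering $X$; it still refines $\mathcal{U}$. The reason for refining to clopen sets \emph{before} applying strong paracompactness is that the members of $\mathcal{W}$ themselves need not be clopen, so I will instead track, for each $W \in \mathcal{W}$, a clopen set $V \in \mathcal{V}$ with $W \subseteq V$.

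Next, exploit the star-finite structure. Declare $W \approx W'$ when there is a finite chain $W = E_0, E_1, \dots, E_k = W'$ in $\mathcal{W}$ with consecutive terms meeting; this is an equivalence relation, and because star-finiteness says exactly that the intersection graph of $\mathcal{W}$ is locally finite, each equivalence class $\mathcal{W}_s$ ($s \in S$) is countable, since a connected locally finite graph is countable. Put $G_s = \bigcup \mathcal{W}_s$. Members of distinct classes are disjoint, so the $G_s$ are pairwise disjoint; they cover $X$; and each $G_s$ is clopen, being open as a union of open sets and closed because its complement is $\bigcup_{t \ne s} G_t$. Now fix $s$, enumerate $\mathcal{W}_s = \{W_n : n < \omega\}$, choose $V_n \in \mathcal{V}$ with $W_n \subseteq V_n$, and set $V_n^{*} = V_n \cap G_s$, which is clopen. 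Since $\bigcup_n W_n = G_s$, the countable clopen family $\{V_n^{*} : n < \omega\}$ covers $G_s$ and refines $\mathcal{U}$, and a countable clopen cover can be disjointified in the usual way: $D_{s,n} = V_n^{*} \setminus (V_0^{*} \cup \dots \cup V_{n-1}^{*})$ is clopen, the $D_{s,n}$ are pairwise disjoint in $n$, they still cover $G_s$, and each lies inside a member of $\mathcal{U}$. Letting $s$ and $n$ both vary, $\{D_{s,n} : s \in S,\ n < \omega\}$ is the desired pairwise disjoint open refinement of $\mathcal{U}$ covering $X$.

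The only genuinely non-routine point, flagged above, is recognizing that one should clopen-refine first and carry the clopen supersets along, because the star-finite refinement need not consist of clopen sets and so cannot be disjointified directly; once the countable chunks $G_s$ are isolated, each inherits a bona fide countable clopen cover, and the remainder is bookkeeping (the equivalence relation, ``connected plus locally finite implies countable,'' and the Boolean disjointification of a countable clopen cover).
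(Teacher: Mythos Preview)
The paper does not supply its own proof of this theorem; it merely cites \cite[3.1.30]{engelking1978dimension}. Your argument is correct and is essentially the classical proof found there: pass to a clopen refinement, apply strong paracompactness to obtain a star-finite refinement, decompose the latter into countable connected components whose unions are pairwise disjoint clopen sets, and then disjointify the resulting countable clopen cover of each piece.
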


Due to this, the discussion of ``zero dimension'' and
``countable dimension'' is unambiguous
in its usual context of separable metrizable spaces, but it seems more care
must be taken if strong paracompactness is not guaranteed, even if
the space is metrizable. As such, we prefer to refer to subspaces as
zero-dim or zero-ind as appropriate throughout this paper, and only use
zero/countable-dimensional when it is guaranteed these concepts are equivalent.

\section{Relative covering dimension}

It's important to note that zero-dim and zero-ind are both properties
of topological spaces, and thus a subset of a space must be considered
using its subspace topology. Consider then the following variation of
zero covering dimension for a subset which does not consider the subspace topology.
(According to \cite{megaritis2012relative}, for
paracompact spaces this definition coincides with a
definition of relative dimension given in
\cite{zbMATH01038637}.)

\begin{definition}
A subset \(Y\subseteq X\) is \term{relatively zero-dim to \(X\)},
\term{zero-dim\(_X\)}, or \term{0dim\(_X\)}
if for every cover of \(Y\) by sets open in \(X\),
there exists a pairwise disjoint refinement covering \(Y\) by sets open in \(X\).
\end{definition}

We proceed by first demonstrating that this is a stronger property
for a subset than zero-dim.

\begin{proposition}
For any \(Y\subseteq X\),
if \(Y\) is zero-dim\(_X\), then \(Y\) is zero-dim.
\end{proposition}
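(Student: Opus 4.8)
The plan is to translate an open cover of $Y$ in its subspace topology into a cover of $Y$ by sets open in $X$, apply the hypothesis, and translate the resulting refinement back down to $Y$. First I would fix an arbitrary open cover $\mc U$ of the subspace $Y$. By the definition of the subspace topology, for each $U\in\mc U$ there is some $\widehat U$ open in $X$ with $U=\widehat U\cap Y$; then $\widehat{\mc U}=\{\widehat U:U\in\mc U\}$ is a cover of $Y$ by sets open in $X$.

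Next, invoking the assumption that $Y$ is zero-dim$_X$ applied to $\widehat{\mc U}$, I obtain a pairwise disjoint family $\mc V$ of sets open in $X$ that refines $\widehat{\mc U}$ and covers $Y$. I then claim $\mc V\rest Y:=\{V\cap Y:V\in\mc V\}$ witnesses that $Y$ is zero-dim: each $V\cap Y$ is open in the subspace $Y$; the family covers $Y$ because $\mc V$ does; it is pairwise disjoint because intersecting pairwise disjoint sets with $Y$ keeps them pairwise disjoint; and it refines $\mc U$, since for $V\in\mc V$ one may pick $\widehat U$ with $V\subseteq\widehat U$, whence $V\cap Y\subseteq\widehat U\cap Y=U\in\mc U$.

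As for difficulties, there is essentially no substantial obstacle here — the argument is a bookkeeping exercise in moving between the ambient and subspace topologies. The only point needing (minor) attention is the selection of an $X$-open set $\widehat U$ with trace $U$ for each $U\in\mc U$; this is exactly what the definition of the subspace topology guarantees, and one can even dispense with the choice by letting $\widehat U$ be the union of all $X$-open sets whose intersection with $Y$ equals $U$.
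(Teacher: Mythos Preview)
Your proof is correct and follows essentially the same approach as the paper: lift the subspace cover to an $X$-open cover, apply zero-dim$_X$, and trace the resulting pairwise disjoint refinement back to $Y$. Your write-up is in fact slightly more explicit in verifying the refinement, disjointness, and covering conditions, and your remark on avoiding choice is a harmless bonus.
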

\begin{proof}
Assume \(Y\) is zero-dim\(_X\). Let \(\mc U\) be a cover of \(Y\) by
open subsets of \(Y\). Let \(\mc U'\) be a collection of open subsets of
\(X\) such that for each \(U\in \mc U\) there exists \(U'\in\mc U'\)
such that \(U=U'\cap Y\). Let \(\mc V'\) be a pairwise-disjoint open refinement
of \(\mc U'\) covering \(Y\). Let \(\mc V=\{V'\cap Y:V'\in\mc V'\}\).
It follows that \(\mc V\) is a pairwise disjoint refinement of \(\mc U\)
of sets open in \(Y\) that covers \(Y\), so \(Y\) is zero-dim.
\end{proof}

However, zero-dim\(_X\) is not equivalent, in general,
to the usual covering dimension of a subspace.

\begin{definition}
Let \(N=\mathbb R\times[0,\infty)\) have the
\term{tangent disc topology},
where points in \(\mathbb R\times(0,\infty)\) have their usual Euclidean
neighborhoods, and points in \(\tuple{x,0}\in\mathbb R\times\{0\}\) have neighborhoods
of the form 
\(U_{x,\epsilon}=\{\tuple{x,0}\}\cup B_\epsilon(\tuple{x,\epsilon})\)
for \(\epsilon>0\)
(where \(B_r(P)\) is the open ball of radius \(r\) centered at \(P\)).
\end{definition}

\begin{definition}
For \(B\subseteq\mathbb R\), let
\(N(B)=(B\times\{0\}) \cup (\mathbb R\times(0,\infty))\)
be the \term{bubble space} of \(B\), with the subspace topology
inherited from \(N=N(\mathbb R)\).
\end{definition}

\begin{example}
If \(B\subseteq\mathbb R\) is uncountable, then \(N(B)\) is
a \(T_{3.5}\) space with a subset \(B\times\{0\}\) which
is zero-dim but not \(\sigma\)-zero-dim\(_X\).

\end{example}

\begin{proof}
We first note that \(B\times\{0\}\) is discrete,
and any discrete space is zero-dim.

We proceed by showing that if \(B\times\{0\}\) is \(\sigma\)-zero-dim\(_X\),
then \(B\) is countable. Consider the open cover
\(\{U_{z,1}:z\in\mathbb R\}\) of any subset of \(B\).
For any pairwise disjoint open refinement of this cover,
only countably-many points \(\tuple{z,0}\) from the subset can be covered, as
any open subset of \(U_{z,1}\) containing \(\tuple{z,0}\)
must contain a distinct point in the countable set \(\mathbb Q^2\).
\end{proof}

Note that if \(|B|=\aleph_1\) and \(MA+\neg CH\) holds, then \(N(B)\) is
also \(T_4\) \cite{fleissner1980,bing1951metrization}, so the properties
zero-dim\(_X\) and zero-dim are consistently distinct for normal spaces.
Nonetheless, these notions do in fact coincide when considering metrizable
spaces.

\begin{theorem}
Let \(X\) be metrizable. Then for any \(Y\subseteq X\),
\(Y\) is zero-dim\(_X\) if and only if \(Y\) is zero-dim.
\end{theorem}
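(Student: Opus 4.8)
The forward implication, that zero-dim\(_X\) gives zero-dim, is already the content of the proposition above, so the plan is to prove the converse: assuming \(Y\) is zero-dim in its subspace topology, show it is zero-dim\(_X\). Fix a compatible metric \(d\) on \(X\) and let \(\mc U\) be a cover of \(Y\) by sets open in \(X\). The traces \(\{U\cap Y:U\in\mc U\}\) form an open cover of \(Y\), so by zero-dim there is a pairwise disjoint family \(\mc V\) of sets open in \(Y\) that covers \(Y\) and refines it. Since \(\mc V\) is a pairwise disjoint open cover of \(Y\), each \(V\in\mc V\) is also closed in \(Y\); after deleting \(\emptyset\) from \(\mc V\) if present, I would choose for each \(V\in\mc V\) a set \(U_V\in\mc U\) with \(V\subseteq U_V\cap Y\).

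The task is then to ``fatten'' each relatively clopen \(V\) to a set \(V^{**}\) open in \(X\), keeping the family pairwise disjoint and keeping \(V^{**}\subseteq U_V\). The device I would use is a metric one: because \(V\) and \(Y\setminus V\) are both closed in \(Y\), their closures in \(X\) meet \(Y\) in exactly \(V\) and \(Y\setminus V\) respectively, so in particular \(V\cap\cl_X(Y\setminus V)=\emptyset\). Set
\[
V^*=\{x\in X:d(x,V)<d(x,\cl_X(Y\setminus V))\},\qquad V^{**}=V^*\cap U_V,
\]
with the convention \(d(x,\emptyset)=\infty\) (relevant only when \(V=Y\)). Each \(V^*\), hence each \(V^{**}\), is open in \(X\) because \(x\mapsto d(x,V)-d(x,\cl_X(Y\setminus V))\) is continuous. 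For \(y\in V\) one has \(d(y,V)=0\) while \(y\notin\cl_X(Y\setminus V)\) forces \(d(y,\cl_X(Y\setminus V))>0\), so \(V\subseteq V^*\); combined with \(V\subseteq U_V\) this gives \(V\subseteq V^{**}\), and therefore \(\{V^{**}:V\in\mc V\}\) is a family of sets open in \(X\) that refines \(\mc U\) and covers \(Y\).

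The only point needing care, and what I expect to be the crux, is pairwise disjointness. If \(x\in V^*\cap W^*\) for distinct \(V,W\in\mc V\), then since \(W\subseteq Y\setminus V\subseteq\cl_X(Y\setminus V)\) we get \(d(x,V)<d(x,\cl_X(Y\setminus V))\leq d(x,W)\), and by symmetry \(d(x,W)<d(x,V)\) as well, which is absurd; hence the \(V^{**}\) are pairwise disjoint. This exhibits the required pairwise disjoint refinement of \(\mc U\) by sets open in \(X\) covering \(Y\), so \(Y\) is zero-dim\(_X\). I note that metrizability of \(X\) enters only through the distance functions \(d(\cdot,A)\) used to separate the pieces, and that no local finiteness of \(\mc V\) is needed; one can also check in passing that \(V^*\cap Y=V\), so on \(Y\) the new cover restricts exactly back to \(\mc V\).
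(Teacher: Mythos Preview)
Your proof is correct and follows the same overall structure as the paper's: trace the cover to \(Y\), apply zero-dim to get a pairwise disjoint clopen refinement in \(Y\), extend each piece to an open set in \(X\) while preserving disjointness, and then intersect with the chosen members of \(\mc U\). The only difference is in the extension step: where the paper invokes \cite[II.XI.21.2]{kuratowski2014topology} as a black box to produce pairwise disjoint open \(V_{\alpha,\beta}\subseteq X\) with \(V_{\alpha,\beta}\cap Y=V'_{\alpha,\beta}\), you supply the explicit metric construction \(V^*=\{x\in X:d(x,V)<d(x,\cl_X(Y\setminus V))\}\), which is in effect a direct proof of that lemma. This makes your argument self-contained and entirely elementary, at the minor cost of re-deriving a classical fact; beyond that, the two proofs are interchangeable.
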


\begin{proof}
Assume \(Y\) is zero-dim. Let \(\mc U=\{U_\alpha:\alpha<\kappa\}\)
be a cover of \(Y\) by open
subsets of \(X\). Then \(\mc U'=\{U'_\alpha:\alpha<\kappa\}\)
where \(U'_\alpha=U_\alpha\cap Y\) for each \(\alpha<\kappa\) is a cover of \(Y\)
by open subsets of \(Y\). Let
\(\mc V'=\{V'_{\alpha,\beta}:\alpha<\kappa,\beta<\lambda_\alpha\}\)
where \(V'_{\alpha,\beta}\subseteq U'_\alpha\)
be a pairwise-disjoint refinement of \(\mc U'\)
by sets open in \(Y\) covering \(Y\).

By \cite[II.XI.21.2]{kuratowski2014topology} and the metrizability of \(X\),
there exists a pairwise disjoint collection
\(\mc V=\{V_{\alpha,\beta}:\alpha<\kappa,\beta<\lambda_\alpha\}\) of sets open
in \(X\) such that \(V'_{\alpha,\beta}=V_{\alpha,\beta}\cap Y\).
Let \(W_{\alpha,\beta}=V_{\alpha,\beta}\cap U_\alpha\), so
\(\mc W=\{W_{\alpha,\beta}:\alpha<\kappa,\beta<\lambda_\alpha\}\) is a
pairwise disjoint refinement of \(\mc U\) by sets open in \(X\).
Let \(y\in Y\), and choose \(\alpha<\kappa,\beta<\lambda_\alpha\) such that
\(y\in V'_{\alpha,\beta}\). Then both
\(y\in V'_{\alpha,\beta}\subseteq U_\alpha'\subseteq U_\alpha\)
and \(y\in V'_{\alpha,\beta}\subseteq V_{\alpha,\beta}\), so \(y\in W_{\alpha,\beta}\).
Thus \(\mc W\) covers \(Y\), and \(Y\) is zero-dim\(_X\).
\end{proof}

The use of \cite[II.XI.21.2]{kuratowski2014topology} is no coincidence,
as it was the technique utilized in \cite{babinkostova2021countable}
to obtain game-theoretic characterizations of countable dimension
among strongly paracompact metrizable spaces. Put another way, topological
games which deal with open covers of the entire space
more naturally characterize spaces in terms of
the relative dimension of their subsets, but this distinction is lost when
only studying metrizable spaces.
This is explored further in the following section.

\section{Perfect- and limited-information strategies}

Our goal is to study countable dimension in the context of the following two
games.

\begin{definition}
    Let \(\mc O\) collect the open covers of a space.
    The game
    \(G_c(\mc O,\mc O)\)
    is played by ONE and TWO. During each round \(n<\omega\),
    ONE chooses some \(\mc U_n\in\mc O\), and then
    TWO chooses some pairwise disjoint open refinement \(\mc V_n\) of \(\mc U_n\).
    TWO wins this game if \(\bigcup\{\mc V_n:n<\omega\}\) covers \(X\),
    and ONE wins otherwise.
\end{definition}

\begin{definition}
    Let \(\mc T\) be the topology of a space, and
    \(\mc O_{\mc A}=\{\mc U\subseteq\mc T:\forall A\in\mc A\exists U\in\mc U(A\subseteq U)\}\).
    Then the game
    \(G_1(\mc O_{\mc A},\mc O)\)
    is played by ONE and TWO. During each round \(n<\omega\),
    ONE chooses some \(\mc U_n\in\mc O_{\mc A}\), and then
    TWO chooses some open set \(V_n\in\mc U_n\).
    TWO wins this game if \(\{\mc V_n:n<\omega\}\) covers \(X\),
    and ONE wins otherwise.
\end{definition}

We will be particularly interested when \(\mc A\) collects the ``zero-dimensional''
subsets of a space, and write e.g. \(\mc O_{0\dim_X}\).

We also will consider a natural variation of this game.

\begin{definition}
    Let \(\mc T\) be the topology of a space,
    \(B_A=\{U\in\mc T:A\subseteq U\}\), and
    \(\mc N_{\mc A}=\{B_A:A\in\mc A\}\).
    Then the game
    \(G_1(\mc N_{\mc A},\neg\mc O)\)
    is played by ONE and TWO. During each round \(n<\omega\),
    ONE chooses some \(B_{A_n}\in\mc N_{\mc A}\), and then
    TWO chooses some open set \(V_n\in B_{A_n}\).
    TWO wins this game if \(\{\mc V_n:n<\omega\}\) fails to cover \(X\),
    and ONE wins otherwise.
\end{definition}

These are both examples of \term{selection games} \(G_1(\mc A,\mc B)\)
(ONE chooses \(A_n\in\mc A\), TWO chooses \(b_n\in A_n\), TWO wins if
\(\{b_n:n<\omega\}\in\mc B\))
used to characterize many topological properties
\cite{DIAS201558}.

\begin{definition}
    Let \(G\) be a game where players choose from the set \(M\).
    Then \(\tau:M^{<\omega}\to M\) defines a \term{(perfect-information)
    strategy} for either player, where \(\tau(\tuple{m_0,\dots,m_N})\in M\)
    is the move selected by the strategy in response to the opponent
    choosing \(m_i\in M\) during round \(0\leq i\leq N\).
    If player \(P\) has a winning strategy that defeats every play
    of the opponent for \(G\), then we write \(P\win G\).
    
    Likewise, \(\tau:M\times\omega\to M\) defines a \term{Markov strategy}
    that makes its choice \(\tau(m,N)\) based on only the most recent move \(m\in M\) 
    of the opponent and the current round number \(N<\omega\), and
    \(\tau:\omega\to M\) defines a \term{predetermined strategy}
    that ignores the moves of the opponent and makes a choice \(\tau(N)\) 
    based only on the value of the current
    round \(N<\omega\). Then \(P\markwin G\) (resp. \(P\prewin G\)) means the player
    \(P\) has a winning Markov (resp. predetermined) strategy that defeats
    every play of the opponent for \(G\).
\end{definition}

So for example, \(ONE\notprewin G_1(\mc O,\mc O)\) characterizes the Rothberger
covering property (commonly expressed as the \term{selection principle}
\(S_1(\mc O,\mc O)\)) \cite{MR1378387}. Likewise,
\(TWO\markwin G_1(\mc O,\mc O)\) characterizes the countability of a
\(T_1\) space (see e.g. \cite{CLONTZ2019106815});
we will soon demonstrate a generalization of this result.

We now formalize the relationship between
    \(G_1(\mc O_{\mc A},\mc O)\) and
    \(G_1(\mc N_{\mc A},\neg\mc O)\).

\begin{definition}
Let \(G,H\) be games with players ONE and TWO.
Then we say \(G\) and \(H\) are \term{equivalent} provided:
    \begin{itemize}
        \item \(ONE\win G\) if and only if \(ONE\win H\).
        \item \(TWO\win G\) if and only if \(TWO\win H\).
        \item \(ONE\prewin G\) if and only if \(ONE\prewin H\).
        \item \(TWO\markwin G\) if and only if \(TWO\markwin H\).
    \end{itemize}

And we say \(G\) and \(H\) are \term{dual} provided:
    \begin{itemize}
        \item \(ONE\win G\) if and only if \(TWO\win H\).
        \item \(TWO\win G\) if and only if \(ONE\win H\).
        \item \(ONE\prewin G\) if and only if \(TWO\markwin H\).
        \item \(TWO\markwin G\) if and only if \(ONE\prewin H\).
    \end{itemize}
\end{definition}

The following may be proven from the techniques of
\cite{clontz2020dual}.

\begin{proposition}\label{dualGames}
    \(G_1(\mc O_{\mc A},\mc O)\) and
    \(G_1(\mc N_{\mc A},\neg\mc O)\) are dual for any \(\mc A\).
\end{proposition}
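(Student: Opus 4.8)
The plan is to exhibit an explicit bijective correspondence between plays of $G_1(\mc O_{\mc A},\mc O)$ and plays of $G_1(\mc N_{\mc A},\neg\mc O)$ that swaps the roles of the two players, and then verify that this correspondence carries strategies to strategies of the appropriate type (perfect, predetermined, Markov). This is exactly the "duality" framework of \cite{clontz2020dual}: one shows the two games fit the pattern of a \emph{dual pair} of selection games, so that the four bullet points in the definition of "dual" follow from a single combinatorial lemma. So the first step is to recall precisely what it means, in the language of \cite{clontz2020dual}, for two games $G_1(\mc A_0,\mc B_0)$ and $G_1(\mc A_1,\mc B_1)$ to be dual, and to isolate the hypotheses their general theorem requires.

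**Key steps.** First, I would set up the move-translation. In $G_1(\mc O_{\mc A},\mc O)$, a move of ONE is a cover $\mc U\in\mc O_{\mc A}$, i.e. a family of open sets that "captures" every $A\in\mc A$; a move of TWO is a single $V\in\mc U$; TWO wins if the chosen $V$'s cover $X$. In $G_1(\mc N_{\mc A},\neg\mc O)$, a move of ONE is some $B_A=\{U\in\mc T: A\subseteq U\}$ for $A\in\mc A$; a move of TWO is $V\in B_A$, i.e. any open $V\supseteq A$; TWO wins if the chosen $V$'s \emph{fail} to cover $X$. The natural dictionary is: ONE's play $A\mapsto B_A$ in $G_1(\mc N_{\mc A},\neg\mc O)$ corresponds to ONE demanding, in $G_1(\mc O_{\mc A},\mc O)$, the cover "associated to $A$"; dually, TWO's response $V\supseteq A$ in the negative game reads off exactly which member of the corresponding cover TWO declines to use. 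Concretely one checks that $\bigcup\mc N_{\mc A}$-style moves and $\mc O_{\mc A}$-style moves have the same "winning sets": a sequence of TWO's moves covers $X$ in one game iff the associated sequence fails to cover $X$ in the other. Second, with this dictionary in hand, I would invoke (or re-derive, following \cite{clontz2020dual}) the principle that a bijection of move-sets respecting the win condition by negation automatically converts: a winning strategy for ONE in $G$ into a winning strategy for TWO in $H$ and vice versa (the full-information clauses); a \emph{predetermined} winning strategy for ONE (a function of the round number only) into a \emph{Markov} winning strategy for TWO (a function of the last move and the round number), because ONE's "last move" in one game is precisely the datum TWO needs to reconstruct the round, and conversely; and symmetrically for the remaining clause. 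Third, I would check the two mild set-theoretic side conditions that make the dictionary a genuine bijection on the relevant move spaces — essentially that every $\mc U\in\mc O_{\mc A}$ arises, up to the equivalence that matters for the game, from the neighborhood filters $B_A$, and that $\mc A$ is nonempty in the degenerate-case bookkeeping.

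**Main obstacle.** The routine bijection of single rounds is easy; the delicate point is the \emph{strategy-transfer for limited-information strategies}, specifically matching "predetermined for ONE" with "Markov for TWO." The asymmetry in the definition of "dual" (ONE gets predetermined, TWO gets Markov, never the other pairing) is forced by the structure of the games, and getting the bookkeeping exactly right — verifying that a Markov strategy for TWO in the negative game, which may legitimately consult ONE's last move, really does collapse to a predetermined strategy for ONE in the positive game under the translation, and that no information is smuggled across — is where the argument of \cite{clontz2020dual} does its real work. I would lean on their general theorem for this rather than re-proving it, checking only that our pair $(\mc O_{\mc A},\mc O)$, $(\mc N_{\mc A},\neg\mc O)$ satisfies its hypotheses; the proof then reduces to that verification plus the single-round dictionary above.
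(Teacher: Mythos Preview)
Your proposal is correct and follows essentially the same approach as the paper: both reduce the proposition to verifying the hypotheses of the general duality theorem in \cite{clontz2020dual} and then invoke that theorem. The paper's proof is simply more compact about \emph{which} hypotheses are checked: it names the precise notion (that \(\mc N_{\mc A}\) is a \emph{reflection} of \(\mc O_{\mc A}\)) and verifies the two defining conditions directly, namely that the range of every choice function on \(\mc N_{\mc A}\) lies in \(\mc O_{\mc A}\), and that every \(\mc U\in\mc O_{\mc A}\) contains the range of some such choice function. Your ``single-round dictionary'' and ``side conditions'' amount to the same verification, so once you replace the informal talk of a ``bijective correspondence between plays'' with these two reflection conditions, your argument and the paper's coincide.
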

\begin{proof}
Let \(\mathbf C(X)\) collect the choice functions
\(f:X\to\bigcup X\) such that \(f(x)\in x\) for all
\(x\in X\).
To see that \(\mc N_{\mc A}\) is a reflection
of \(\mc O_{\mc A}\) we may verify:
  \begin{itemize}
    \item \(\ran f\in\mc O_{\mc A}\) for all \(f\in\mathbf C(\mc N_{\mc A})\), and
    \item for each \(\mc U\in\mc O_{\mc A}\) there exists
    \(f_A\in\mathbf C(\mc N_{\mc A})\) such that \(\ran{f_{\mc U}}\subseteq \mc U\).
  \end{itemize}
Therefore the games are dual.
\end{proof}

Note when \(\mc A=X^1\) collects
the singletons of \(X\), then 
\(G_1(\mc O_{\mc A},\mc O)\) is the Rothberger game and
\(G_1(\mc N_{\mc A},\neg\mc O)\) is the well-known point-open game.
These were shown to be dual for perfect-information strategies 
by Galvin in \cite{MR0493925}.

The following result shows that
\(TWO\markwin G_c(\mathcal O,\mathcal O)\) provides a very
natural characterization of ``countable relative dimension''
for an arbitrary topological space.

\begin{theorem}
\(TWO\markwin G_c(\mathcal O,\mathcal O)\) if and only if
\(X\) is \(\sigma\)-zero-dim\(_X\).
\end{theorem}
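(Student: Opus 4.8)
The plan is to match winning Markov strategies for $TWO$ with countable decompositions of $X$ into relatively zero-dimensional subsets. For the direction $(\Leftarrow)$, suppose $X=\bigcup_{n<\omega}Y_n$ with each $Y_n$ zero-dim\(_X\). I would define a Markov strategy $\tau$ for $TWO$ by letting $\tau(\mathcal U,n)$, for each open cover $\mathcal U$ of $X$, be a pairwise disjoint family of $X$-open sets, each contained in a member of $\mathcal U$, that covers $Y_n$; such a family exists because $\mathcal U$ is in particular a cover of $Y_n$ by sets open in $X$ and $Y_n$ is zero-dim\(_X\), and one may be selected by the axiom of choice. Each $\tau(\mathcal U,n)$ is a legal move (a pairwise disjoint open refinement of $\mathcal U$), and along any run $\mathcal U_0,\tau(\mathcal U_0,0),\mathcal U_1,\tau(\mathcal U_1,1),\dots$ consistent with $\tau$, the family $\bigcup\{\tau(\mathcal U_n,n):n<\omega\}$ covers $\bigcup_{n<\omega}Y_n=X$; hence $\tau$ is winning and $TWO\markwin G_c(\mathcal O,\mathcal O)$.

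For the direction $(\Rightarrow)$, fix a winning Markov strategy $\tau$ for $TWO$ and set
$$Y_n:=\bigcap\Bigl\{{\textstyle\bigcup}\,\tau(\mathcal U,n):\mathcal U\text{ is an open cover of }X\Bigr\}\qquad(n<\omega).$$
The first step is that $\{Y_n:n<\omega\}$ covers $X$: if some $x\in X$ lay outside every $Y_n$, then for each $n$ we could fix an open cover $\mathcal U_n$ of $X$ with $x\notin\bigcup\tau(\mathcal U_n,n)$, and letting $ONE$ play $\mathcal U_0,\mathcal U_1,\dots$ against $\tau$ would produce a run lost by $TWO$, contradicting the choice of $\tau$. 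It then suffices to show that each $Y_n$ is zero-dim\(_X\). Given an $X$-open cover $\mathcal W$ of $Y_n$, the strategy is to extend $\mathcal W$ to an open cover $\mathcal U$ of $X$ every member of which either is disjoint from $Y_n$ or is contained in a member of $\mathcal W$; since $Y_n\subseteq\bigcup\tau(\mathcal U,n)$ by construction, the subfamily $\{V\in\tau(\mathcal U,n):V\cap Y_n\ne\emptyset\}$ is then a pairwise disjoint $X$-open refinement of $\mathcal W$ that covers $Y_n$.

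I expect the construction of such a $\mathcal U$ to be the main obstacle, and the subtlety to be localized at the points of $\overline{Y_n}\setminus\bigcup\mathcal W$: every open neighbourhood of such a point meets $Y_n$ yet lies inside no member of $\mathcal W$, so a cover $\mathcal U$ with the stated property cannot exist unless $\overline{Y_n}\subseteq\bigcup\mathcal W$. Circumventing this will likely require either passing to a different countable family of subsets of $X$ for which this boundary phenomenon is impossible — for instance \emph{closed} sets, each still contained in some $Y_n$ so as to remain covered by $\tau(\mathcal U,n)$ at the relevant round — or else assembling the desired pairwise disjoint refinement of $\mathcal W$ in $\omega$ stages, applying $\tau$ to a fresh cover at each stage and peeling off the portion of $Y_n$ that remains near the ``boundary'' $\overline{Y_n}\setminus\bigcup\mathcal W$ while keeping successive stages mutually disjoint. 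The two covering verifications above are routine; this boundary analysis is where the content of the theorem lies.
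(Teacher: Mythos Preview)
Your \((\Leftarrow)\) direction and the verification that \(\bigcup_n Y_n=X\) in \((\Rightarrow)\) are exactly what the paper does. The divergence is in showing each \(Y_n\) is zero-dim\(_X\), and here you have located the difficulty in the wrong place. The paper's argument is a single sentence: for any open cover \(\mathcal U\) of \(X\), the move \(\tau(\mathcal U,n)\) is by the rules of \(G_c(\mathcal O,\mathcal O)\) a pairwise disjoint open refinement of \(\mathcal U\), and by the very definition \(X_n=\bigcap_{\mathcal U\in\mathcal O}\bigcup\tau(\mathcal U,n)\) it covers \(X_n\); therefore \(X_n\) is zero-dim\(_X\). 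No extension of a partial cover, no boundary analysis, no iteration.

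Your worry---that the definition of zero-dim\(_X\) quantifies over all \(X\)-open covers of \(Y_n\), whereas \(\tau\) is only handed covers of the whole space---is a legitimate reading of the definition, and the paper does not pause to bridge that gap; it simply treats ``every open cover of \(X\) has a pairwise disjoint open refinement covering \(X_n\)'' as the content of ``\(X_n\) is zero-dim\(_X\)''. Whether one regards that as an ellipsis or a genuine lacuna, the point for your proposal is that you have \emph{not} completed the argument either: both suggested workarounds (replacing the \(Y_n\) by closed pieces, or peeling off the boundary in \(\omega\) stages) are left as sketches, and neither is obviously going to succeed---for instance, there is no evident countable family of closed sets inside \(Y_n\) on which \(\tau(\cdot,n)\) still covers, and iterating \(\tau(\cdot,n)\) at a fixed \(n\) on successively modified covers gives no control over disjointness between stages. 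So as written your \((\Rightarrow)\) direction is incomplete, and the machinery you propose to fill it is not what the paper uses.
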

\begin{proof}
Let \(\tau\) be a winning Markov strategy for TWO.
Let \[X_n=\bigcap_{\mathcal U\in\mathcal O}\bigcup\tau(\mathcal U,n)\]

Let
\(\mathcal U\) cover \(X\), then \(\tau(\mathcal U,n)\) is a pairwise disjoint refinement
of \(\mathcal U\) and covers \(X_n\). Therefore \(X_n\) is zero-dim\(_X\).

Then consider \(x\in X\). If \(x\not\in X_n\) for all \(n<\omega\), choose
\(\mathcal U_n\in\mathcal O\) with \(x\not\in\bigcup\tau(\mathcal U_n,n)\).
Then \(\mathcal U_n\) is a successful counterattack to the winning strategy \(\tau\),
a contradiction. Therefore \(x\in X_n\) for some \(n<\omega\), and
\(X=\bigcup_{n<\omega} X_n\). Thus \(X\) is \(\sigma\)-zero-dim\(_X\).

Now assume \(X=\bigcup_{n<\omega}X_n\) with \(X_n\) zero-dim\(_X\).
Let \(\tau(\mc U,n)\) be a pairwise disjoint open refinement of \(\mc U\)
covering \(X_n\).
It follows that \(\tau\) is a winning Markov strategy for TWO.
\end{proof}

Likewise, Markov strategies for TWO in
\(G_1(\mathcal O_{\mc A},\mathcal O)\) also may characterize spaces
of countable dimension (or spaces which are countable unions of
anything you'd like).

\begin{lemma}\label{sigmaMark}
Let \(X\) be \(T_1\). Then
\(TWO\markwin G_1(\mathcal O_{\mc A},\mathcal O)\) if and only if
\(X\) is \(\sigma\)-\(\mc A\).
\end{lemma}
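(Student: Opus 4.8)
The plan is to mimic the structure of the proof of the preceding theorem about $G_c(\mc O,\mc O)$, adapting it to the single-selection game $G_1(\mc O_{\mc A},\mc O)$. For the forward direction, suppose $\tau:\mc O_{\mc A}\times\omega\to\mc T$ is a winning Markov strategy for TWO. For each $n<\omega$ I would set
\[
X_n=\bigcup\bigl\{\{x\}:x\in X\text{ and }x\in\tau(\mc U,n)\text{ for all }\mc U\in\mc O_{\mc A}\text{ with }x\in\text{some }U\in\mc U\bigr\},
\]
or more cleanly, mirroring the theorem above, $X_n=\bigcap\{\tau(\mc U,n):\mc U\in\mc O_{\mc A}\}$ — but one must be careful here, since not every $\mc U\in\mc O_{\mc A}$ need cover all of $X$, so the right object is really $X_n=\{x\in X:\text{for every }\mc U\in\mc O_{\mc A}\text{, }x\in\tau(\mc U,n)\text{ whenever }\mc U\text{ admits a member containing }x\}$. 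The key claim is that $X_n\in\mc A$ and that $X=\bigcup_{n<\omega}X_n$. The second part is exactly the argument from the theorem above: if some $x$ lies in no $X_n$, then for each $n$ we can choose $\mc U_n\in\mc O_{\mc A}$ (e.g. with a member $U$ containing $x$, since $\{X\}\in\mc O_{\mc A}$ always works as a fallback, or $T_1$-ness lets us separate points) such that $x\notin\tau(\mc U_n,n)$, and then the play $\mc U_0,\mc U_1,\dots$ against $\tau$ is never covered at $x$, contradicting that $\tau$ is winning.

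The crux is showing $X_n\in\mc A$, and this is where the definition of $\mc O_{\mc A}$ and the $T_1$ hypothesis enter. The point is that $\mc O_{\mc A}$ is defined so that every $A\in\mc A$ is contained in a single member of any $\mc U\in\mc O_{\mc A}$; to run the converse one needs that $X_n$, as built above, actually belongs to $\mc A$ — but in general $\mc A$ is an arbitrary collection, not closed under subsets, so this cannot literally hold. I expect the actual statement to rely on the standing convention (used throughout the paper for $\mc O$, $\mc O_{zd}$, etc.) that $\mc A$ is closed under subsets, or at least that a subset of a member of $\mc A$ is again in $\mc A$; under that convention, it suffices to exhibit some $A\in\mc A$ with $X_n\subseteq A$, and then $X_n\in\mc A$. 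To produce such an $A$: since $\tau$ is winning, for the specific play where ONE plays $\mc U$ in round $n$ with $\mc U\in\mc O_{\mc A}$ witnessing some $A\in\mc A$ (i.e. $\mc U$ has a member $\supseteq A$), TWO's response $\tau(\mc U,n)$ is a single open set, and consistency across all such $\mc U$ forces $X_n$ to be contained in a member of $\mc A$. This ``consistency/intersection'' step, combined with the $T_1$ separation of points needed to manufacture enough covers $\mc U_n$ in the contradiction argument, is the main obstacle and the place requiring genuine care.

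For the reverse direction, assume $X=\bigcup_{n<\omega}X_n$ with each $X_n\in\mc A$. Define the Markov strategy $\tau(\mc U,n)$ to be some $U\in\mc U$ with $X_n\subseteq U$; such a $U$ exists precisely because $\mc U\in\mc O_{\mc A}$ and $X_n\in\mc A$. Then for any play $\mc U_0,\mc U_1,\dots$ of ONE, TWO's responses satisfy $X_n\subseteq\tau(\mc U_n,n)$ for each $n$, so $\{\tau(\mc U_n,n):n<\omega\}$ covers $\bigcup_n X_n=X$; hence $\tau$ is winning, and $TWO\markwin G_1(\mc O_{\mc A},\mc O)$. This direction is routine and should take only a couple of lines; all the real content is in the forward direction's verification that the sets $X_n$ land in $\mc A$.
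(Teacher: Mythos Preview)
Your reverse direction is correct and matches the paper exactly. Your overall architecture for the forward direction is also right: set \(X_n=\bigcap_{\mc U\in\mc O_{\mc A}}\tau(\mc U,n)\), show \(X=\bigcup_n X_n\) by the same contradiction argument as in the preceding theorem, and show each \(X_n\) lies inside some member of \(\mc A\). Two points, however, need correction.

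First, your worry about \(\mc U\in\mc O_{\mc A}\) not covering \(X\) is a red herring, and the conditional definition you propose is unnecessary. The plain intersection \(X_n=\bigcap_{\mc U\in\mc O_{\mc A}}\tau(\mc U,n)\) is exactly what is wanted; no clause about ``whenever \(\mc U\) admits a member containing \(x\)'' is needed. Second, and more seriously, you never actually supply the argument for the step you yourself flag as the crux. You write that ``consistency across all such \(\mc U\) forces \(X_n\) to be contained in a member of \(\mc A\),'' but this is not an argument, and your speculation that one must assume \(\mc A\) is closed under subsets is incorrect: no such hypothesis is present or needed. The paper's move is concrete. Suppose \(X_n\not\subseteq A\) for every \(A\in\mc A\); pick \(x_A\in X_n\setminus A\) for each \(A\). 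Using \(T_1\), each \(X\setminus\{x_A\}\) is open and contains \(A\), so \(\mc U=\{X\setminus\{x_A\}:A\in\mc A\}\in\mc O_{\mc A}\). Then \(\tau(\mc U,n)=X\setminus\{x_A\}\) for some \(A\), whence \(x_A\notin\tau(\mc U,n)\), contradicting \(x_A\in X_n\subseteq\tau(\mc U,n)\). This is precisely where the \(T_1\) hypothesis enters, and it yields \(X_n\subseteq A_n\) for some \(A_n\in\mc A\); then \(X=\bigcup_n X_n\subseteq\bigcup_n A_n\subseteq X\) gives \(\sigma\)-\(\mc A\) directly via the \(A_n\), with no closure-under-subsets assumption.
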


\begin{proof}
Let \(\tau\) be a winning Markov strategy for TWO.
Let
\[ X_n = \bigcap_{\mathcal U\in\mathcal O_{\mc A}} \tau(\mathcal U,n)\]

Suppose \(X_n\not\subseteq A\) for all \(A\in \mc A\).
Pick \(x_A\in X_n\setminus A\) for each \(A\in\mathcal A\).
If \(X\in\mc A\) we're done, so assume \(X\not\in\mc A\). Then
\(\mathcal U=\{X\setminus\{x_A\}:A\in\mathcal A\}\in\mc O_{\mc A}\).
But then \(\tau(\mathcal U,n)=X\setminus\{x_A\}\) for some \(A\in\mc A\). Thus
\(X_n\not\subseteq\tau(\mathcal U,n)\), contradiction.
Thus we have \(X_n\subseteq A_n\in\mc A\) for \(n<\omega\).

Then consider \(x\in X\). If \(x\not\in X_n\) for all \(n<\omega\), choose
\(\mathcal U_n\in\mathcal O\) with \(x\not\in\tau(\mathcal U_n,n)\).
Then \(\mathcal U_n\) is a successful counterattack to the winning strategy \(\tau\),
a contradiction. Therefore \(x\in X_n\) for some \(n<\omega\), and
\(X=\bigcup_{n<\omega} X_n=\bigcup_{n<\omega} A_n\).
Thus \(X\) is \(\sigma\)-zero-dim\(_X\).

Now assume \(X=\bigcup_{n<\omega}X_n\) with \(X_n\in\mc A\).
Let \(\tau(\mc U,n)\) be a member of \(\mc U\) that contains \(X_n\).
It follows that \(\tau\) is a winning Markov strategy for TWO.
\end{proof}

\begin{corollary}
Let \(X\) be \(T_1\). Then the following are equivalent.
\begin{itemize}
    \item 
        \(X\) is \(\sigma\)-zero-dim\(_X\).
    \item
        \(TWO\markwin G_c(\mathcal O,\mathcal O)\)
    \item
        \(TWO\markwin G_1(\mathcal O_{0\dim_X},\mathcal O)\)
    \item
        \(ONE\prewin G_1(\mathcal N_{0\dim_X},\neg\mathcal O)\)
\end{itemize}
\end{corollary}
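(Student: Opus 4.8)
The plan is to observe that each of the four conditions is equivalent to the others by directly invoking results already established in this section, specializing the abstract collection \(\mc A\) to \(0\dim_X\), the family of all subsets of \(X\) that are zero-dim\(_X\). No genuinely new argument is needed beyond checking that the hypotheses of those results are satisfied.

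First, the equivalence of the first two bullets is precisely the preceding theorem characterizing \(TWO\markwin G_c(\mc O,\mc O)\), so nothing further is required there. Second, to connect the first and third bullets, I would apply Lemma~\ref{sigmaMark} with \(\mc A=0\dim_X\): since \(X\) is assumed \(T_1\), the lemma gives \(TWO\markwin G_1(\mc O_{0\dim_X},\mc O)\) if and only if \(X\) is \(\sigma\)-\(0\dim_X\), and the latter unwinds directly from the definition of \(\sigma\)-\(\mc A\) to the statement that \(X\) is \(\sigma\)-zero-dim\(_X\). Third, for the last bullet I would invoke Proposition~\ref{dualGames} with \(\mc A=0\dim_X\), which asserts that \(G_1(\mc O_{0\dim_X},\mc O)\) and \(G_1(\mc N_{0\dim_X},\neg\mc O)\) are dual; by the definition of dual games, \(TWO\markwin G_1(\mc O_{0\dim_X},\mc O)\) holds if and only if \(ONE\prewin G_1(\mc N_{0\dim_X},\neg\mc O)\), which closes the loop.

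Because everything reduces to citation, there is essentially no obstacle here; the only point requiring a moment's care is confirming that ``\(X\) is \(\sigma\)-\(\mc A\)'' for \(\mc A=0\dim_X\) literally coincides with ``\(X\) is \(\sigma\)-zero-dim\(_X\)'', which is immediate. One could alternatively present the corollary as a single cycle of implications, first \(\Rightarrow\) second \(\Rightarrow\) third \(\Rightarrow\) fourth \(\Rightarrow\) first, but since the duality together with the Markov/predetermined correspondence is already packaged inside Proposition~\ref{dualGames}, simply chaining the three pairwise equivalences is the cleanest write-up.
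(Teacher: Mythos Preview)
Your proposal is correct and matches the paper's intended approach: the corollary is stated without proof precisely because it follows immediately from the preceding theorem (the \(G_c\) characterization), Lemma~\ref{sigmaMark} specialized to \(\mc A=0\dim_X\), and the duality in Proposition~\ref{dualGames}, exactly as you outline.
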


We now turn to perfect-information strategies in these games.

\begin{proposition}\label{gdeltasigma}
    If \(\mathcal A\) has the property that for each \(A\in\mathcal A\) there exists
    a \(G_\delta\) \(A^\star\) with \(A\subseteq A^\star\in\mathcal A\), then
    \(ONE\win G_1(\mc N_{\mc A},\neg\mc O)\) if and only if
    \(ONE\prewin G_1(\mc N_{\mc A},\neg\mc O)\)
    if and only if \(X\) is \(\sigma\)-\(\mathcal A\).
\end{proposition}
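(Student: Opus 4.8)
The plan is to establish the cycle of implications
\[
X\text{ is }\sigma\text{-}\mathcal A
\ \Rightarrow\ ONE\prewin G_1(\mc N_{\mc A},\neg\mc O)
\ \Rightarrow\ ONE\win G_1(\mc N_{\mc A},\neg\mc O)
\ \Rightarrow\ X\text{ is }\sigma\text{-}\mathcal A,
\]
observing that only the last implication uses the \(G_\delta\) hypothesis. For the first, if \(X=\bigcup_{n<\omega}A_n\) with each \(A_n\in\mathcal A\), let ONE follow the predetermined strategy that plays \(B_{A_n}\) in round \(n\); whatever open \(V_n\supseteq A_n\) TWO picks, we have \(\bigcup_{n<\omega}V_n\supseteq\bigcup_{n<\omega}A_n=X\), so \(\{V_n:n<\omega\}\) covers \(X\) and ONE wins. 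The second implication is immediate, since a predetermined strategy is in particular a perfect-information strategy.

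For the third implication, fix a winning strategy \(\sigma\) for ONE, and recursively build a family \(\{A_s:s\in\omega^{<\omega}\}\subseteq\mathcal A\) indexing plays against \(\sigma\), as follows. Set \(\sigma(\langle\rangle)=B_{A_{\langle\rangle}}\). Given \(A_s\), invoke the hypothesis to fix a \(G_\delta\) set \(A_s^\star=\bigcap_{k<\omega}G^s_k\in\mathcal A\) with \(A_s\subseteq A_s^\star\) and each \(G^s_k\) open; since \(A_s\subseteq A_s^\star\subseteq G^s_k\), playing \(G^s_k\) is a legal reply for TWO to ONE's move \(B_{A_s}\), so ONE's \(\sigma\)-response after the partial play built along \(s\) followed by \(G^s_k\) has the form \(B_{A_{s\frown k}}\) with \(A_{s\frown k}\in\mathcal A\) (everything being well-defined by recursion on \(|s|\)). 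The claim is that \(X=\bigcup_{s\in\omega^{<\omega}}A_s^\star\), which suffices since \(\omega^{<\omega}\) is countable and every \(A_s^\star\in\mathcal A\). Suppose instead there is \(x\in X\) with \(x\notin A_s^\star\) for every \(s\), and recursively choose \(k_n<\omega\) with \(x\notin G^{s_n}_{k_n}\), where \(s_n=\langle k_0,\dots,k_{n-1}\rangle\); this is possible because \(x\notin A_{s_n}^\star=\bigcap_k G^{s_n}_k\). Then the run in which ONE follows \(\sigma\) while TWO plays \(G^{s_0}_{k_0},G^{s_1}_{k_1},\dots\) is legal, yet \(x\) lies in no move of TWO, so \(\{G^{s_n}_{k_n}:n<\omega\}\) fails to cover \(X\) and TWO wins — contradicting that \(\sigma\) is winning for ONE.

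The hard part is exactly this last construction: the \(G_\delta\) assumption is what forces the tree of relevant plays against \(\sigma\) to be countably branching, so that any point avoided by all of the (countably many) sets \(A_s^\star\) determines a single branch along which TWO defeats \(\sigma\); without it TWO could have uncountably many legal responses to \(B_{A_s}\) and the natural ``cover by nodes'' would be uncountable. One could alternatively route through Proposition~\ref{dualGames} and Lemma~\ref{sigmaMark} by passing to the dual game \(G_1(\mc O_{\mc A},\mc O)\), but that would import the \(T_1\) hypothesis of Lemma~\ref{sigmaMark}; the direct argument above requires no separation axiom, since TWO is only ever asked to dodge one point at a time rather than to manufacture a specific open cover.
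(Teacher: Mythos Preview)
Your proof is correct and follows essentially the same approach as the paper: both build a countably-branching tree of \(G_\delta\) sets in \(\mathcal A\) indexed by \(\omega^{<\omega}\), obtained by enlarging each of ONE's moves to a \(G_\delta\) member of \(\mathcal A\) and using its open layers as TWO's possible replies, then show that a point missed by every node determines a branch along which TWO defeats the strategy. Your additional commentary on why the \(G_\delta\) hypothesis is essential and on the alternative (but \(T_1\)-dependent) route through duality and Lemma~\ref{sigmaMark} is a nice bonus not present in the paper's proof.
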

\begin{proof}
    Of course, if \(X = \bigcup \{A_n:n\in\omega\}\) where \(A_n \in \mathcal A\)
    for each \(n\in\omega\), then One has a winning predetermined strategy.
    So we need only show that ONE having a winning strategy witnesses
    that \(X\) is \(\sigma\)-\(\mathcal A\).
    
    Suppose \(\tau\) is winning for ONE;
    let \(\tau'\) yield corresponding members of \(\mc A\).
    Then \(\tau'(\tuple{})\in\mc A\), so choose \(U_{\tuple{n}}\) open with
    \(\tau'(\tuple{n})\subseteq X_{\tuple{}}
    =\bigcap\{U_{\tuple n}:n<\omega\}\in\mc A\).
    Now if \(U_{s\rest n}\) is defined for \(s\in\omega^{n+1}\) and
    \(0<n\leq|s|\), note
    \(\tau'(\tuple{U_{s\rest 1},\dots,U_{s}})\in\mc A\), so choose
    \(U_{s\concat \tuple{n}}\) open with
    \(\tau'(\tuple{U_{s\rest 1},\dots,U_{s}})\subseteq X_s =
    \bigcap\{U_{s\concat\tuple n}:n<\omega\}\in\mc A\).
    
    We now show that \(X=\bigcup_{s\in\omega^{<\omega}} X_s\). If not,
    pick \(x\not\in X_s\) for all \(s\in\omega^{<\omega}\).
    We then may define \(f\in\omega^\omega\) such that
    \(x\not\in U_{f\rest n+1}\) for all \(n<\omega\).
    Finally, note that the counterattack \(\tuple{U_{f\rest 1},U_{f\rest 2},\dots}\)
    defeats \(\tau\), a contradiction.
\end{proof}

From duality we may obtain the following existing result.

\begin{corollary}[{\cite[Theorem 6]{BabinkostovaScheepersNote}}]
    If \(\mathcal A\) has the property that for each \(A\in\mathcal A\) there exists
    a \(G_\delta\) \(A^\star\) with \(A\subseteq A^\star\in\mathcal A\), then
    \(TWO\win G_1(\mc O_{\mc A},\mc O)\)
    if and only if \(X\) is \(\sigma\)-\(\mathcal A\).
\end{corollary}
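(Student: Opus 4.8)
The plan is to derive this corollary directly from Proposition \ref{gdeltasigma} via the duality established in Proposition \ref{dualGames}. Since \(G_1(\mc O_{\mc A},\mc O)\) and \(G_1(\mc N_{\mc A},\neg\mc O)\) are dual, the defining clauses of duality give in particular that \(TWO\win G_1(\mc O_{\mc A},\mc O)\) if and only if \(ONE\win G_1(\mc N_{\mc A},\neg\mc O)\). Proposition \ref{gdeltasigma}, under the stated \(G_\delta\)-hypothesis on \(\mc A\), tells us that \(ONE\win G_1(\mc N_{\mc A},\neg\mc O)\) holds exactly when \(X\) is \(\sigma\)-\(\mc A\). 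Chaining these two equivalences yields the claim.

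Concretely, I would write: assume the \(G_\delta\)-hypothesis on \(\mc A\). By Proposition \ref{dualGames}, \(TWO\win G_1(\mc O_{\mc A},\mc O)\) iff \(ONE\win G_1(\mc N_{\mc A},\neg\mc O)\). By Proposition \ref{gdeltasigma}, the latter holds iff \(X\) is \(\sigma\)-\(\mc A\). Combining, \(TWO\win G_1(\mc O_{\mc A},\mc O)\) iff \(X\) is \(\sigma\)-\(\mc A\), as desired. This is essentially a two-line argument, so there is no substantive obstacle; the "work" was already done in the two cited propositions.

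The only point requiring a modicum of care is making sure the duality clause being invoked is the perfect-information one — "\(ONE\win G\) iff \(TWO\win H\)" and its converse "\(TWO\win G\) iff \(ONE\win H\)" — rather than one of the limited-information clauses, and getting the direction of the pairing right (TWO in the cover game corresponds to ONE in the \(\neg\mc O\) game). It is also worth remarking that this recovers \cite[Theorem 6]{BabinkostovaScheepersNote}, so the proof can simply note that the statement now follows formally and point the reader to the two propositions above without reproducing any of their content.
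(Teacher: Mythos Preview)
Your proposal is correct and matches the paper's approach exactly: the paper introduces this corollary with the sentence ``From duality we may obtain the following existing result,'' and the intended argument is precisely to combine Proposition~\ref{dualGames} with Proposition~\ref{gdeltasigma} as you describe.
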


Put together, we see the following.

\begin{corollary}
    For \(T_1\) spaces,
    if \(\mathcal A\) has the property that for each \(A\in\mathcal A\) there exists
    a \(G_\delta\) \(A^\star\) with \(A\subseteq A^\star\in\mathcal A\), then
    the following are all equivalent.
    \begin{itemize}
        \item
            \(X\) is \(\sigma\)-\(\mathcal A\).
        \item 
            \(TWO\win G_1(\mc O_{\mc A},\mc O)\).
        \item 
            \(TWO\markwin G_1(\mc O_{\mc A},\mc O)\).
        \item 
            \(ONE\win G_1(\mc N_{\mc A},\neg\mc O)\).
        \item 
            \(ONE\prewin G_1(\mc N_{\mc A},\neg\mc O)\).
    \end{itemize}
\end{corollary}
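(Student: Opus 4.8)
The plan is to observe that the corollary is pure bookkeeping: each of the five listed conditions has already been shown equivalent to ``\(X\) is \(\sigma\)-\(\mathcal A\)'' in one of the preceding results, so it suffices to assemble those equivalences into a single cycle, taking care to track which hypothesis ($T_1$ versus the \(G_\delta\)-enlargement property) each ingredient requires.

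Concretely, I would proceed as follows. First, Lemma~\ref{sigmaMark}, which uses only the $T_1$ hypothesis, supplies ``\(X\) is \(\sigma\)-\(\mathcal A\)'' \(\iff\) \(TWO\markwin G_1(\mc O_{\mc A},\mc O)\). Second, trivially \(TWO\markwin G_1(\mc O_{\mc A},\mc O)\) implies \(TWO\win G_1(\mc O_{\mc A},\mc O)\), since a winning Markov strategy is in particular a winning perfect-information strategy (one simply ignores all but the last move and the round counter). Third, the preceding corollary (obtained from \cite[Theorem 6]{BabinkostovaScheepersNote} by duality), which uses the \(G_\delta\)-enlargement property, gives \(TWO\win G_1(\mc O_{\mc A},\mc O)\) \(\iff\) ``\(X\) is \(\sigma\)-\(\mathcal A\)''; in particular the forward direction closes the loop through the first three bullet points. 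Finally, Proposition~\ref{gdeltasigma}, again under the \(G_\delta\)-enlargement property, gives ``\(X\) is \(\sigma\)-\(\mathcal A\)'' \(\iff\) \(ONE\win G_1(\mc N_{\mc A},\neg\mc O)\) \(\iff\) \(ONE\prewin G_1(\mc N_{\mc A},\neg\mc O)\), folding in the remaining two bullet points.

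I do not expect any genuine obstacle here; the statement is a synthesis corollary. The only things to be vigilant about are: (a) not invoking $T_1$ where it is not available (it is needed only for Lemma~\ref{sigmaMark}, not for Proposition~\ref{gdeltasigma} or the dual corollary), and (b) ensuring the implications actually chain into a cycle touching all five conditions rather than leaving one isolated. A clean way to present the write-up is to exhibit the cycle \(\sigma\text{-}\mathcal A \Rightarrow TWO\markwin G_1(\mc O_{\mc A},\mc O) \Rightarrow TWO\win G_1(\mc O_{\mc A},\mc O) \Rightarrow \sigma\text{-}\mathcal A\) together with the separate string \(\sigma\text{-}\mathcal A \iff ONE\prewin G_1(\mc N_{\mc A},\neg\mc O) \iff ONE\win G_1(\mc N_{\mc A},\neg\mc O)\), citing Lemma~\ref{sigmaMark}, the trivial strategy inclusion, the dual corollary, and Proposition~\ref{gdeltasigma} respectively for the four nontrivial links.
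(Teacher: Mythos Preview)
Your proposal is correct and matches the paper's intent: the corollary is stated without proof, introduced only by ``Put together, we see the following,'' so it is indeed just the assembly of Lemma~\ref{sigmaMark}, Proposition~\ref{gdeltasigma}, and the Babinkostova--Scheepers dual corollary that you describe. One minor simplification you could make is to invoke Proposition~\ref{dualGames} directly to pair \(TWO\win\) with \(ONE\win\) and \(TWO\markwin\) with \(ONE\prewin\), which would let you cite only Lemma~\ref{sigmaMark} and Proposition~\ref{gdeltasigma} without separately appealing to the dual corollary.
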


\begin{corollary}\label{gdeltacor}
    If every zero-dim\(_X\) subset of \(X\) is contained in a
    \(G_\delta\) zero-dim\(_X\) subset, then
    the following are all equivalent.
    \begin{itemize}
        \item
            \(X\) is \(\sigma\)-zero-dim\(_X\).
        \item
            \(TWO\markwin G_c(\mathcal O,\mathcal O)\).
        \item 
            \(TWO\win G_1(\mc O_{0\dim_X},\mc O)\).
        \item 
            \(TWO\markwin G_1(\mc O_{0\dim_X},\mc O)\).
        \item 
            \(ONE\win G_1(\mc N_{0\dim_X},\neg\mc O)\).
        \item 
            \(ONE\prewin G_1(\mc N_{0\dim_X},\neg\mc O)\).
    \end{itemize}
\end{corollary}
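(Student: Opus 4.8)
The plan is to read Corollary~\ref{gdeltacor} as the instance of the earlier results obtained by taking \(\mc A\) to be the collection of all zero-dim\(_X\) subsets of \(X\), so that \(\mc O_{\mc A}=\mc O_{0\dim_X}\), \(\mc N_{\mc A}=\mc N_{0\dim_X}\), and ``\(X\) is \(\sigma\)-\(\mc A\)'' is literally ``\(X\) is \(\sigma\)-zero-dim\(_X\)''. The hypothesis that every zero-dim\(_X\) subset of \(X\) sits inside a \(G_\delta\) zero-dim\(_X\) subset is precisely the condition ``for each \(A\in\mc A\) there is a \(G_\delta\) set \(A^\star\) with \(A\subseteq A^\star\in\mc A\)'' required by Proposition~\ref{gdeltasigma} and by the Babinkostova--Scheepers corollary, so both of those apply to this \(\mc A\).

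I would then chain all six bullet points through the single condition ``\(X\) is \(\sigma\)-zero-dim\(_X\)''. The Theorem characterizing \(TWO\markwin G_c(\mc O,\mc O)\) supplies that bullet directly. Proposition~\ref{gdeltasigma} supplies \(ONE\win G_1(\mc N_{0\dim_X},\neg\mc O)\) and \(ONE\prewin G_1(\mc N_{0\dim_X},\neg\mc O)\). The Babinkostova--Scheepers corollary --- itself the dual, via Proposition~\ref{dualGames}, of Proposition~\ref{gdeltasigma} --- supplies \(TWO\win G_1(\mc O_{0\dim_X},\mc O)\). Finally, for the Markov bullet \(TWO\markwin G_1(\mc O_{0\dim_X},\mc O)\): a winning Markov strategy is in particular a winning strategy, so it implies \(TWO\win G_1(\mc O_{0\dim_X},\mc O)\); conversely, if \(X=\bigcup_{n<\omega}X_n\) with each \(X_n\) zero-dim\(_X\), then sending \((\mc U,n)\) to any member of \(\mc U\) containing \(X_n\) --- one exists because \(\mc U\in\mc O_{0\dim_X}\), exactly as in the easy direction of Lemma~\ref{sigmaMark} --- is a winning Markov strategy. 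Splicing these equivalences together closes the cycle of six items.

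The one point that needs care is the \(T_1\) hypothesis appearing in the corollary immediately preceding \ref{gdeltacor}: it enters only through Lemma~\ref{sigmaMark}, whose hard direction I have deliberately sidestepped above in favor of ``Markov \(\Rightarrow\) perfect-information'' together with the \(G_\delta\)-powered Babinkostova--Scheepers corollary, neither of which uses a separation axiom. So I expect Corollary~\ref{gdeltacor} to hold exactly as stated, with no \(T_1\) assumption needed. The main (and rather minor) obstacle is to confirm that the ``easy direction'' construction of the Markov strategy, and every other invoked statement, really are separation-axiom-free; beyond that, the proof is just a matter of matching up notation.
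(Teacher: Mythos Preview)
Your proposal is correct and is essentially the paper's intended argument: specialize the preceding general results to \(\mc A=\{\text{zero-dim}_X\text{ subsets}\}\) and append the \(G_c\)-theorem. Your explicit check that the \(T_1\) hypothesis can be dropped---by routing \(TWO\markwin G_1(\mc O_{0\dim_X},\mc O)\Rightarrow\sigma\text{-zero-dim}_X\) through ``Markov \(\Rightarrow\) perfect'' and the Babinkostova--Scheepers corollary rather than through the hard direction of Lemma~\ref{sigmaMark}---is a genuine improvement in rigor over simply citing the preceding \(T_1\)-corollary, and confirms that Corollary~\ref{gdeltacor} holds as stated.
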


It would seem natural for the following conjecture to hold, but its
validity is currently an open question.

\begin{conjecture}\label{conjEq}
\(TWO\win G_c(\mc O,\mc O)\) may be added to
Corollary \ref{gdeltacor}.
\end{conjecture}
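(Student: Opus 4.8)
The plan is to prove the one nontrivial implication: that \(TWO\win G_c(\mc O,\mc O)\), under the hypothesis of Corollary~\ref{gdeltacor}, forces \(X\) to be \(\sigma\)-zero-dim\(_X\). The converse is automatic, since \(\sigma\)-zero-dim\(_X\) already gives \(TWO\markwin G_c(\mc O,\mc O)\) and hence \(TWO\win G_c(\mc O,\mc O)\). So I would fix a winning perfect-information strategy \(\tau\) for TWO in \(G_c(\mc O,\mc O)\) and aim to write \(X\) as a countable union of zero-dim\(_X\) subsets.

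The natural attack mirrors the proof of Proposition~\ref{gdeltasigma}. First I would set \(X_{\tuple{}}=\bigcap_{\mc U\in\mc O}\bigcup\tau(\tuple{\mc U})\); exactly as in the proof that \(TWO\markwin G_c(\mc O,\mc O)\) characterizes \(\sigma\)-zero-dim\(_X\), this set is zero-dim\(_X\), since for any open cover \(\mc U\) of \(X\) the family \(\tau(\tuple{\mc U})\) is a pairwise disjoint open refinement of \(\mc U\) covering \(X_{\tuple{}}\). Using the hypothesis, I would enlarge \(X_{\tuple{}}\) to a \(G_\delta\) zero-dim\(_X\) set \(X_{\tuple{}}^\star=\bigcap_{n<\omega}W_n\) with each \(W_n\) open. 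I would then try to build recursively a tree of legal partial plays following \(\tau\), indexed by \(\omega^{<\omega}\): to a node \(s\) one attaches a partial play \(\tuple{\mc U^s_0,\dots,\mc U^s_{|s|-1}}\) (with \(\tau\) supplying TWO's refinements) together with a \(G_\delta\) zero-dim\(_X\) set \(X_s\supseteq\bigcap_{\mc U\in\mc O}\bigcup\tau(\tuple{\mc U^s_0,\dots,\mc U^s_{|s|-1},\mc U})\), the children of \(s\) being read off from a fixed \(G_\delta\) representation of \(X_s\) as in Proposition~\ref{gdeltasigma}. If the countable family \(\{X_s:s\in\omega^{<\omega}\}\) then covers \(X\), we are done; otherwise a point \(x\) outside every \(X_s\) should determine a branch \(f\in\omega^\omega\) along which \(x\) is never covered by \(\tau\)'s responses in the full play determined by \(f\), contradicting that \(\tau\) is winning.

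The hard part --- and, I suspect, the reason the conjecture is still open --- is the passage from the \(G_\delta\) representation of \(X_s\) to the (countably many) children of \(s\). In the \(G_1\)-type games of Proposition~\ref{gdeltasigma} this step is free: there the moves whose intersection carves out the relevant set are themselves the open sets in a fixed \(G_\delta\) decomposition of a set in \(\mc A\), so countably many of them suffice by construction. In \(G_c(\mc O,\mc O)\), ONE's moves form the collection of all open covers of \(X\), which carries no analogous countable-sampling structure, and the \(G_\delta\) hypothesis here constrains the zero-dim\(_X\) sets rather than the covers; in particular it is unclear how an open set \(W\supseteq X_s\) is supposed to produce a single cover \(\mc U\) with \(\bigcup\tau(\tuple{\dots,\mc U})\subseteq W\). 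Concretely, one is left needing that \(\{\bigcup\tau(\tuple{\mc U}):\mc U\in\mc O\}\), and each of its relativizations at a node, admits a countable subfamily with the same intersection, and I see no reason this should hold without extra hypotheses on \(X\).

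An alternative route, in the spirit of the Telg\'{a}rsky-style equivalence developed in this paper, would be to transfer a winning \(G_c(\mc O,\mc O)\)-strategy for TWO to a winning strategy for TWO in \(G_1(\mc O_{0\dim_X},\mc O)\) and then invoke Corollary~\ref{gdeltacor}. Here TWO would maintain a shadow play of \(G_c(\mc O,\mc O)\) in which the cover \(\mc U_n\in\mc O_{0\dim_X}\) offered by the opponent is handed to the shadow ONE (legal, since every member of \(\mc O_{0\dim_X}\) is in particular an open cover of \(X\), as singletons are zero-dim\(_X\)), and would retain a single member of \(\mc U_n\) from the refinement \(\tau\) returns. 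The obstruction here is the familiar one separating \(G_1\) from \(G_c\): a round's refinement can be infinite while only one of its sets may be kept, and TWO cannot plan an enumeration in advance because \(\tau\)'s output depends on the entire history, so making this work would require genuinely exploiting the defining property of \(\mc O_{0\dim_X}\). Carrying through either route would settle Conjecture~\ref{conjEq}; as things stand, both stall at essentially the same point --- reducing ONE's move set in \(G_c(\mc O,\mc O)\) to something countably manageable.
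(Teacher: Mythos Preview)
This statement is labeled a \emph{conjecture} in the paper, and the paper explicitly says ``its validity is currently an open question.'' There is no proof in the paper to compare against; the only related argument is the special case handled in Theorem~\ref{strongPara} via the external result of \cite{babinkostova2021countable}, which requires strong paracompactness and metrizability rather than the \(G_\delta\) hypothesis of Corollary~\ref{gdeltacor}.

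Your write-up is not a proof and does not pretend to be one: you outline two natural attacks (a tree construction mimicking Proposition~\ref{gdeltasigma}, and a transfer to \(G_1(\mc O_{0\dim_X},\mc O)\)) and correctly identify where each stalls. That diagnosis is accurate. In the tree approach, the obstruction you name --- that there is no evident way to pass from an open set \(W\supseteq X_s\) to a single cover \(\mc U\) with \(\bigcup\tau(\tuple{\dots,\mc U})\subseteq W\) --- is exactly the point: the \(G_\delta\) hypothesis controls the zero-dim\(_X\) sets, not ONE's moves, so the countable sampling that drives Proposition~\ref{gdeltasigma} has no analogue here. In the transfer approach, the \(G_1\)-versus-\(G_c\) mismatch you flag is the well-known reason these games are not equivalent in general (indeed the paper's final corollary records their inequivalence even for separable metrizable spaces). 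So your proposal is a fair account of why the conjecture remains open, but it is not --- and you do not claim it to be --- a proof.
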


Furthermore, this would align with existing results on
metrizable spaces, given the following lemma.

\begin{lemma}[{\cite[4.1.19]{engelking1978dimension}}]
Let \(X\) be metrizable. Then every zero-dim subspace is contained
in a \(G_\delta\) zero-dim subspace.
\end{lemma}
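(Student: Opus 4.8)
The plan is to realize the desired \(G_\delta\) set as \(\bigcap_{i\geq 1}W_i\), where each \(W_i\) is an open subset of \(X\) containing \(Y\), designed so that its trace on the intersection is a pairwise disjoint open cover of arbitrarily small mesh. Fix a compatible metric \(d\) on \(X\). Since \(Y\) is zero-dim and metrizable, for each \(i\geq 1\) the open cover \(\{B_d(y,1/2i)\cap Y:y\in Y\}\) of \(Y\) admits a pairwise disjoint open refinement \(\mathcal V_i\) covering \(Y\); each \(V\in\mathcal V_i\) is then clopen in \(Y\) (its complement in \(Y\) is the union of the other members) and has \(d\)-diameter at most \(1/i\).

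Next I would ``thicken'' each \(V\in\mathcal V_i\) to an open subset of \(X\) by a distance function: set
\[
\tilde V=\{x\in X:\ d(x,V)<\min(1/i,\ d(x,Y\setminus V))\},
\]
with the convention \(d(x,\emptyset)=+\infty\) so that \(\tilde V\) is just the \(1/i\)-neighborhood of \(V\) in the degenerate case \(V=Y\). Then \(\tilde V\) is open in \(X\); it contains \(V\), since for \(v\in V\) we have \(d(v,V)=0\) while \(d(v,Y\setminus V)>0\) because \(V\) is open in \(Y\); its \(d\)-diameter is at most \(3/i\), since \(\tilde V\subseteq\{x:d(x,V)<1/i\}\); and, crucially, for distinct \(V,V'\in\mathcal V_i\) the sets \(\tilde V,\tilde V'\) are disjoint, since \(x\in\tilde V\cap\tilde V'\) would force \(d(x,V)<d(x,Y\setminus V)\leq d(x,V')\) and, symmetrically, \(d(x,V')<d(x,V)\). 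Put \(W_i=\bigcup_{V\in\mathcal V_i}\tilde V\), an open set with \(Y\subseteq W_i\), and let \(Z=\bigcap_{i\geq 1}W_i\): a \(G_\delta\) subset of \(X\) with \(Y\subseteq Z\).

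It remains to check that \(Z\) is zero-dim. For each \(i\), the family \(\mathcal W_i=\{\tilde V\cap Z:V\in\mathcal V_i\}\setminus\{\emptyset\}\) is a partition of \(Z\) (as \(Z\subseteq W_i\)) into nonempty clopen sets of \(d\)-diameter at most \(3/i\). Hence \(\mathcal B=\bigcup_{i\geq 1}\mathcal W_i\) is a base for \(Z\) --- given \(z\in Z\) and \(\varepsilon>0\), any member of \(\mathcal W_i\) containing \(z\) with \(3/i<\varepsilon\) works --- and it is \(\sigma\)-discrete, each \(\mathcal W_i\) being a clopen partition, hence a discrete family. The final step is then the standard fact that a regular space with a \(\sigma\)-discrete base of clopen sets is zero-dim: writing \(\mathcal B=\bigcup_n\mathcal B_n\) with each \(\mathcal B_n\) a discrete (hence pairwise disjoint) family of clopen sets, given an open cover \(\mathcal U\) of \(Z\) one passes to the subfamily \(\mathcal B_n'\) of members lying inside some \(U\in\mathcal U\), sets \(G_n=\bigcup\mathcal B_n'\) (clopen), and refines \(\mathcal U\) by the sets \(C_B=B\setminus(G_0\cup\dots\cup G_{n-1})\) for \(B\in\mathcal B_n'\); these are clopen, pairwise disjoint, and cover \(Z\).

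I expect the main obstacle to be precisely this last step, together with the need to carry enough structure through the construction to land on covering dimension zero rather than merely a base of clopen sets: as Roy's example \cite{RoyPrabir1962} shows, for metrizable spaces \(\ind=0\) is strictly weaker than \(\dim=0\), so the \(\sigma\)-discreteness produced by the pairwise disjoint, mesh-\(\to 0\) covers \(\mathcal W_i\) is essential, not cosmetic, and one must resist the temptation to stop once a clopen base is in hand. A minor bookkeeping point is the degenerate case \(\mathcal V_i=\{Y\}\), absorbed by the \(d(\cdot,\emptyset)=+\infty\) convention. Of course one may instead simply invoke \cite[4.1.19]{engelking1978dimension}, but the construction above is self-contained.
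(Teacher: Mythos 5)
Your proof is correct, but note that the paper does not prove this lemma at all --- it is quoted verbatim from Engelking \cite[4.1.19]{engelking1978dimension} --- so what you have supplied is a self-contained argument for a result the authors only cite. The construction is sound at every step: the thickenings \(\tilde V\) are open, pairwise disjoint for distinct members of \(\mathcal V_i\) (the symmetric inequality \(d(x,V)<d(x,Y\setminus V)\leq d(x,V')\) does the work), contain \(V\), and have diameter at most \(3/i\); hence each \(\mathcal W_i\) is a discrete clopen partition of \(Z=\bigcap_i W_i\) of mesh tending to \(0\), giving a \(\sigma\)-discrete clopen base; and your disjointification \(C_B=B\setminus(G_0\cup\dots\cup G_{n-1})\) correctly produces a pairwise disjoint clopen refinement of an arbitrary open cover (each \(G_n\) is closed because the complement of \(\bigcup\mathcal B_n'\) in \(Z\) is the union of the remaining members of the partition \(\mathcal B_n\)). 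Two small remarks. First, your invocation of regularity in the last step is superfluous: the argument uses only the \(\sigma\)-discrete clopen base, not regularity. Second, your closing caution is well taken and worth keeping: since the paper's ``zero-dim'' is covering dimension zero, stopping at ``\(Z\) has a clopen base'' would only establish the paper's ``zero-ind,'' which by Roy's example is genuinely weaker for non-separable metrizable spaces; the \(\sigma\)-discreteness of the partitions \(\mathcal W_i\) is exactly what closes that gap, and your argument carries it through correctly.
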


Recall from earlier
that zero-dim and zero-dim\(_X\) are equivalent in the context
of metrizable spaces. So it follows that
the equivalences in Corollary \ref{gdeltacor}
are guaranteed for metrizable spaces.

Likewise, and zero-dim and zero-ind are equivalent
for \(T_2\) strongly paracompact spaces, so for the following
theorem we relax our
notation to allow the use of ``zero dimensional'' (``zd'' for short)
and ``countable dimensional''.

\begin{theorem}\label{strongPara}
Let \(X\) be strongly paracompact and metrizable. Then the following
are equivalent.
    \begin{itemize}
        \item
            \(X\) is countable-dimensional.
        \item
            \(TWO\win G_c(\mathcal O,\mathcal O)\).
        \item
            \(TWO\markwin G_c(\mathcal O,\mathcal O)\).
        \item 
            \(TWO\win G_1(\mc O_{zd},\mc O)\).
        \item 
            \(TWO\markwin G_1(\mc O_{zd},\mc O)\).
        \item 
            \(ONE\win G_1(\mc N_{zd},\neg\mc O)\).
        \item 
            \(ONE\prewin G_1(\mc N_{zd},\neg\mc O)\).
    \end{itemize}
\end{theorem}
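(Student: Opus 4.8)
The plan is to obtain every equivalence other than the one involving \(TWO\win G_c(\mc O,\mc O)\) directly from Corollary~\ref{gdeltacor}, after a short translation of vocabulary, and then to splice \(TWO\win G_c(\mc O,\mc O)\) into the resulting cycle with one additional input --- the single place where strong paracompactness is indispensable.

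First I would set up the dictionary. Since \(X\) is metrizable, the theorem of Section~2 shows that for every \(Y\subseteq X\) the properties ``\(Y\) is zero-dim\(_X\)'' and ``\(Y\) is zero-dim'' coincide; and since \(X\) is in addition strongly paracompact and metrizable, so is every subspace of \(X\) (strong paracompactness of a metrizable space is inherited by subspaces, e.g.\ by restricting a \(\sigma\)-star-finite base), so \cite[3.1.30]{engelking1978dimension} further identifies these with ``\(Y\) is zero-ind''. Hence ``zd'' is unambiguous on subsets of \(X\), the families \(\mc O_{zd}=\mc O_{0\dim_X}\) and \(\mc N_{zd}=\mc N_{0\dim_X}\) agree, and ``\(X\) is countable-dimensional'' is literally ``\(X\) is \(\sigma\)-zero-dim\(_X\)''. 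I would then check that \(X\) satisfies the hypothesis of Corollary~\ref{gdeltacor}: a zero-dim\(_X\) subset \(Y\) is zero-dim, hence by \cite[4.1.19]{engelking1978dimension} is contained in a \(G_\delta\) zero-dim set \(Y^\star\), and \(Y^\star\) is again zero-dim\(_X\) by the Section~2 theorem. Thus Corollary~\ref{gdeltacor} applies, and under the identifications above it already delivers the mutual equivalence of ``\(X\) is countable-dimensional'', \(TWO\markwin G_c(\mc O,\mc O)\), \(TWO\win G_1(\mc O_{zd},\mc O)\), \(TWO\markwin G_1(\mc O_{zd},\mc O)\), \(ONE\win G_1(\mc N_{zd},\neg\mc O)\), and \(ONE\prewin G_1(\mc N_{zd},\neg\mc O)\) --- that is, of all seven listed conditions except \(TWO\win G_c(\mc O,\mc O)\).

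It remains to weave in \(TWO\win G_c(\mc O,\mc O)\), and this is the main obstacle. One direction is immediate, since a winning Markov strategy is in particular a winning strategy: \(TWO\markwin G_c(\mc O,\mc O)\) implies \(TWO\win G_c(\mc O,\mc O)\). For the converse --- that a winning strategy for \(TWO\) in \(G_c(\mc O,\mc O)\) already forces \(X\) to be countable-dimensional --- I would invoke \cite{babinkostova2021countable}, where the Kuratowski selection technique \cite[II.XI.21.2]{kuratowski2014topology} is used to show that \(G_c(\mc O,\mc O)\) characterizes countable dimension among strongly paracompact metrizable spaces; with the previous paragraph, this closes the cycle. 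I expect this to be the genuinely hard point: without strong paracompactness, a winning strategy for \(TWO\) in \(G_c(\mc O,\mc O)\) is only known to yield that every sequence of open covers admits a selection of pairwise disjoint open refinements covering \(X\), a property strictly weaker than countable-dimensionality even for compact metrizable spaces, so converting a history-dependent strategy into an actual countable zero-dimensional decomposition of \(X\) rests on the star-finite refinements that strong paracompactness provides --- precisely the ingredient whose absence keeps Conjecture~\ref{conjEq} open.
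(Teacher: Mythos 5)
Your proposal is correct and follows essentially the same route as the paper: all equivalences except \(TWO\win G_c(\mc O,\mc O)\) come from Corollary~\ref{gdeltacor} together with \cite[4.1.19]{engelking1978dimension} and the metrizable-case identification of zero-dim with zero-dim\(_X\), and the remaining condition is spliced in via \cite{babinkostova2021countable}, with strong paracompactness used exactly where you say it is (to identify zero-ind with zero-dim). Your added details --- verifying the \(G_\delta\) hypothesis of Corollary~\ref{gdeltacor} explicitly and noting the heritability of the identifications to subspaces --- are correct elaborations of steps the paper leaves implicit.
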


\begin{proof}
All equivalences except \(TWO\win G_c(\mathcal O,\mathcal O)\) are obtained from
Corollary \ref{gdeltacor} and the previous lemma.
This missing equivalence is obtained from
\cite{babinkostova2021countable}, whose proof assumes zero-ind is
equivalent to zero-dim, which is guaranteed by strong paracompactness.
\end{proof}

Due to use of the small-inductive characterization of zero dimension
in \cite{babinkostova2021countable},
we believe the following question remains open.

\begin{question}
Is strong paracompactness required in Theorem \ref{strongPara}?
\end{question}

\section{Inequivalence of
\texorpdfstring{\(G_c(\mc O,\mc O)\)}{G\_c(O,O)} and
\texorpdfstring{\(G_1(\mc O_{0\dim_X},\mc O)\)}{G\_1(O\_(0dim\_X),O)}
}

We begin with the following useful lemma, which generalizes
the classic result \cite[4.3]{Telgarsky1975}
of Telg{\'{a}}rsky on the equivalence of point-open and finite-open
games.

\begin{lemma}
Let \(\mc B\) be the closure of \(\mc A\) under finite unions.
Then the games \(G_1(\mc N_{\mc A},\neg\mc O)\) and
\(G_1(\mc N_{\mc B},\neg\mc O)\) are equivalent.
\end{lemma}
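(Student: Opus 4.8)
The plan is to establish the four clauses in the definition of \emph{equivalent} between $G_1(\mc N_{\mc A},\neg\mc O)$ and $G_1(\mc N_{\mc B},\neg\mc O)$. Since every $A\in\mc A$ lies in $\mc B$, every move available to ONE in the $\mc A$-game is also available in the $\mc B$-game; so each of the four implications ``the $\mc B$-game is at least as hard for ONE / as easy for TWO'' is immediate by having the relevant player reuse their strategy, once one observes that TWO's move set (open supersets of $A_n$, versus open supersets of the larger $B_n$) only shrinks as $\mc A$ grows, which is the direction that makes TWO's job no easier. Concretely, $B_{A}\supseteq B_{B}$ whenever $A\subseteq B$, so a TWO-strategy that wins against all ONE-plays in the $\mc B$-game, reinterpreted, still has all its responses legal in the $\mc A$-game; and dually for ONE. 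So the real content is the reverse direction: transferring a winning ONE-strategy (resp. TWO-strategy) from the $\mc A$-game to the $\mc B$-game.

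First I would handle ONE's strategies, both perfect-information and predetermined, following Telg\'{a}rsky's idea. Given a winning strategy $\sigma$ for ONE in $G_1(\mc N_{\mc B},\neg\mc O)$, whose moves are (generated by) sets $B=A^0\cup\dots\cup A^{k-1}$ with each $A^i\in\mc A$, ONE will simulate in the $\mc A$-game by playing the pieces $A^0,\dots,A^{k-1}$ over $k$ consecutive rounds, collecting TWO's open responses $V^0,\dots,V^{k-1}$, and then feeding $V^0\cup\dots\cup V^{k-1}$ back to $\sigma$ as TWO's single response to $B$ — this is legal because $B\subseteq V^0\cup\dots\cup V^{k-1}$ exactly when $A^i\subseteq V^i$ for each $i$. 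The bookkeeping requires fixing in advance, for each $\mc B$-move, a decomposition into finitely many $\mc A$-pieces; a play of the $\mc A$-game in which ONE follows this schedule is thus organized into consecutive blocks, and the union over all rounds of TWO's $\mc A$-responses equals the union over all simulated rounds of TWO's $\mc B$-responses. Hence TWO fails to cover $X$ in the $\mc A$-play iff TWO fails to cover $X$ in the simulated $\mc B$-play, which $\sigma$ guarantees; so ONE wins. If $\sigma$ is predetermined, the schedule of $\mc A$-pieces is itself predetermined (it does not consult TWO), so the simulating strategy is predetermined as well, giving $ONE\prewin G_1(\mc N_{\mc A},\neg\mc O)\Rightarrow ONE\prewin G_1(\mc N_{\mc B},\neg\mc O)$ — wait, I need it the other way; re-reading, the nontrivial transfers are $\mc A\Rightarrow\mc B$ for TWO-wins and $\mc B\Rightarrow\mc A$ for ONE-wins (predetermined and perfect), matching what the simulation above produces.

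Next I would handle TWO's Markov strategies, which is the step I expect to be the main obstacle, since a Markov strategy sees only the current opponent move and round number, and cannot remember which block of a simulation it is in. Given a winning Markov strategy $\tau$ for TWO in $G_1(\mc N_{\mc A},\neg\mc O)$, I want one in the $\mc B$-game. When ONE plays $B_n$ with chosen decomposition $B_n=A_n^0\cup\dots\cup A_n^{k_n-1}$, TWO should respond with a single open set containing $B_n$ obtained by \emph{intersecting} the $\mc A$-responses $\tau(B_{A_n^i},m)$ over the indices $i$ — but the round indices $m$ that $\tau$ expects must be assigned so that across the whole $\mc B$-play every natural number is used at most once as an $\mc A$-round index, so that the resulting infinite sequence of $\mc A$-moves is a legitimate $\mc A$-play against $\tau$. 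The trick, exactly as in Telg\'{a}rsky, is to index the simulated $\mc A$-rounds by a fixed bijection $\omega\times\omega\to\omega$ (or by assigning to the $i$-th piece of the $n$-th $\mc B$-round the value $\langle n,i\rangle$), so that from $(B_n,n)$ alone — together with the fixed choice of decomposition, which depends only on $B_n$ — TWO can reconstruct all the round numbers $\langle n,0\rangle,\dots,\langle n,k_n-1\rangle$ to feed $\tau$; this keeps the new strategy Markov. Since $V_n=\bigcap_{i<k_n}\tau(B_{A_n^i},\langle n,i\rangle)\supseteq B_n$ and $V_n\subseteq\tau(B_{A_n^i},\langle n,i\rangle)$ for each $i$, if $\{V_n:n<\omega\}$ covered $X$ then so would the $\mc A$-responses $\{\tau(B_{A_n^i},\langle n,i\rangle):n<\omega,i<k_n\}$, contradicting that $\tau$ wins (these $\mc A$-moves form a valid play since the round indices $\langle n,i\rangle$ are distinct and every $\mc A$-index appears); hence TWO wins the $\mc B$-game. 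The one subtlety to check is that $\langle n,i\rangle$ ranges over \emph{all} of $\omega$ as $n,i$ vary with $i<k_n$, which holds provided every decomposition is nonempty, so that the simulated play is a full $\omega$-length play and ONE in the $\mc A$-game cannot ``cheat'' by never being challenged on some round — and if some $B_n$ has an empty decomposition then $B_n=\emptyset$, a degenerate case we may absorb by padding. This completes all four clauses.
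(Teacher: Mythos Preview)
Your overall architecture matches the paper's: the four easy directions follow from \(\mc N_{\mc A}\subseteq\mc N_{\mc B}\), and the four converses are handled by Telg\'{a}rsky-style block simulation. Your treatment of ONE's perfect and predetermined strategies is essentially the paper's Converses~1 and~2.

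There is, however, a genuine error in your Markov argument for TWO. You define
\[
V_n=\bigcap_{i<k_n}\tau(B_{A_n^i},\langle n,i\rangle)
\]
and assert \(V_n\supseteq B_n\). That containment is false: \(\tau(B_{A_n^i},\cdot)\) is only guaranteed to contain \(A_n^i\), not the other pieces \(A_n^j\), so the intersection need not contain \(B_n=\bigcup_i A_n^i\), and \(V_n\) is not a legal response in \(G_1(\mc N_{\mc B},\neg\mc O)\). The correct combination is the \emph{union}
\[
V_n=\bigcup_{i<k_n}\tau(B_{A_n^i},\langle n,i\rangle),
\]
which does contain \(B_n\) and satisfies \(\bigcup_n V_n=\bigcup_{n,i}\tau(B_{A_n^i},\langle n,i\rangle)\), so the covering equivalence goes through directly. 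This is what the paper does (and likewise in its Converse~3 for TWO's perfect-information strategy, which you do not explicitly address).

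A secondary point: your remark that \(\{\langle n,i\rangle:n<\omega,\ i<k_n\}\) exhausts \(\omega\) ``provided every decomposition is nonempty'' is not correct for a generic pairing function; with all \(k_n=1\), for instance, the range is just \(\{\langle n,0\rangle:n<\omega\}\). The paper sidesteps this by padding each decomposition to an infinite one via \(A(B,n)=A(B,0)\) for \(n>N_B\) and then taking the union over all \(i<\omega\). Alternatively one can simply observe that the simulated \(\mc A\)-play on the used rounds extends to a full play, and a subfamily of a non-cover is still a non-cover; but the argument as you wrote it does not establish what you claim.
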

\begin{proof}
Note that since \(\mc A\subseteq\mc B\), \(\mc N_{\mc A}\subseteq\mc N_{\mc B}\).
Therefore we immediately have the following implications:
\begin{enumerate}
    \item \(ONE\prewin G_1(\mc N_{\mc A},\neg\mc O)\) implies
          \(ONE\prewin G_1(\mc N_{\mc B},\neg\mc O)\)
    \item \(ONE\win G_1(\mc N_{\mc A},\neg\mc O)\) implies
          \(ONE\win G_1(\mc N_{\mc B},\neg\mc O)\)
    \item \(TWO\win G_1(\mc N_{\mc B},\neg\mc O)\) implies
          \(TWO\win G_1(\mc N_{\mc A},\neg\mc O)\)
    \item \(TWO\markwin G_1(\mc N_{\mc B},\neg\mc O)\) implies
          \(TWO\markwin G_1(\mc N_{\mc A},\neg\mc O)\)
\end{enumerate}

For each \(B\in\mc B\), let \(N_B<\omega\) and \(A(B,n)\in\mc A\)
with \(B=\bigcup_{n\leq N_B}A(B,n)\). For convenience, let
\(A(B,n)=A(B,0)\) for \(n>N_B\), so
\(B=\bigcup_{n<\omega}A(B,n)\) (we will not be concerned with
the case \(B=\emptyset\) since ONE's moves are always improved by
choosing larger sets). We will assume strategies for and plays
by ONE choose an element of \(\mc A\) or \(\mc B\) directly each round, rather than
\(\mc N_{\mc A}\) or \(\mc N_{\mc B}\).

\textit{Converse 1.}
Suppose the sequence \(\tuple{B_0,B_1,\dots}\)
witnesses \(ONE\prewin G_1(\mc N_{\mc B},\neg\mc O)\).
Consider the predetermined strategy \[\tuple{A(B_0,0),\dots,A(B_0,N_{B_0}),
A(B_1,0),\dots,A(B_1,N_{B_1}),\dots}\] by ONE in
\(G_1(\mc N_{\mc A},\neg\mc O)\). Any response by TWO is of the form
\[\tuple{U(0,0),\dots,U(0,N_{B_0}),
U(1,0),\dots,U(1,N_{B_1}),\dots}\]
where \(A(B_i,j)\subseteq U(i,j)\).
Let \(U_i=\bigcup_{j\leq N_{B_i}}U(i,j)\); then
\(B_i\subseteq\bigcup_{j\leq N_{B_i}}U(i,j)=U_i\). Then
\(\tuple{U_0,U_1,\dots}\) is an unsuccessful response by TWO
against the winning predetermined strategy \(\tuple{B_0,B_1,\dots}\).
Thus \(\{U_i:i<\omega\}\) is not a cover, and it follows
that \(\{U(i,j):i<\omega,j\leq N_{B_i}\}\) is not a cover as well.
Thus the above predetermined strategy for ONE in \(G_1(\mc N_{\mc B},\neg\mc O)\)
is winning.

\textit{Converse 2.}
Suppose \(\tau\) is a strategy witnessing
\(ONE\win G_1(\mc N_{\mc B},\neg\mc O)\). Let
\(B_0=\tau(\tuple{})\) and \(m_0=N_{B_0}\). Then 
\(\tau(\tuple{})=\bigcup_{i\leq m_0}A(B_0,i)\).
Let \(\tau'(\tuple{V_{0,0},\dots,V_{0,i-1}})=A(B_0,i)\) for
\(i\leq m_0\). Note then that
\(\bigcup_{i\leq m_0}\tau'(\tuple{V_{0,k},\dots,V_{0,i-1}})
=B_0=\tau(\tuple{})\).

Suppose \(m_i<\omega\) is defined for \(i\leq p<\omega\)
and \(\tau'\)
has been defined for each initial segment of
\(\tuple{V_{0,0},\cdots,V_{0,m_0},\cdots,V_{p,0},\cdots,V_{p,m_p-1}}\)
where 
\[U_k=\bigcup_{i\leq m_k}V_{k,i}\supseteq\tau(\tuple{U_0,\cdots,U_{k-1}})\]
for \(k<p\) and
\[
\tau(\tuple{U_0,\cdots,U_{p-1}})=\bigcup_{i\leq m_p}\tau'(
\tuple{V_{0,0},\cdots,V_{0,m_0},\cdots,V_{p-1,0},\cdots,V_{p-1,i-1}})
.\]

Consider
\(\tuple{V_{0,0},\cdots,V_{0,m_0},\cdots,V_{p,0},\cdots,V_{p,m_p}}\),
and let 
\[
U_p=\bigcup_{i\leq m_p}V_{p,i}\supseteq
\tau(\tuple{U_0,\cdots,U_{p-1}})
.\]
Set
\(B_{p+1}=\tau(\tuple{U_0,\cdots,U_{p}})\) and \(m_{p+1}=N_{B_{p+1}}\).
Then 
\(\tau(\tuple{U_0,\cdots,U_p})=\bigcup_{i\leq m_{p+1}}A(B_{p+1},i)\).
Let \[\tau'(
\tuple{V_{0,0},\cdots,V_{0,m_0},\cdots,V_{p,0},\cdots,V_{p,m_p},
V_{p+1,0},\cdots,V_{p+1,i-1}}
)=A(B_{p+1},i)\] for
\(i\leq m_{p+1}\). Note then that
\[\bigcup_{i\leq m_{p+1}}\tau'(
\tuple{V_{0,0},\cdots,V_{0,m_0},\cdots,
V_{p+1,0},\cdots,V_{p+1,i-1}}
)=B_{p+1}=\tau(\tuple{U_0,\cdots,U_p}).\]
Finally, by following the construction we observe that any
counter-play against \(\tau'\) produces a corresponding
counter-play against \(\tau\) with the same union.
Thus since \(\tau\) is winning, so is \(\tau'\).

\textit{Converse 3.}
Suppose \(\tau\) is a strategy witnessing
\(TWO\win G_1(\mc N_{\mc A},\neg\mc O)\). Let
\[
\tau'(\tuple{B_0,\cdots,B_n}) =
\bigcup_{i\leq N_{B_n}}
\tau(\tuple{A(B_0,0),\cdots,A(B_0,N_{B_0}),\cdots,
A(B_n,0),\cdots,A(B_n,i)}).
\]
Then any counter-play \(\tuple{B_0,B_1,\cdots}\)
against \(\tau'\) corresponds to a counter-play
\[\tuple{A(B_0,0),\cdots,A(B_0,N_{B_0}),A(B_1,0),\cdots}\]
against \(\tau\) where both strategies cover
the same subset of the space. Therefore since
\(\tau\) is winning, so is \(\tau'\).

\textit{Converse 4.}
Suppose \(\tau\) is a strategy witnessing
\(TWO\markwin G_1(\mc N_{\mc A},\neg\mc O)\). Let
\(\theta:\omega^2\to\omega\) be a bijection. Then
we define the Markov strategy \(\tau'\) in
\(G_1(\mc N_{\mc B},\neg\mc O)\) by
\(\tau'(B,n)=\bigcup_{i< \omega}\tau(A(B,i),\theta(n,i))\).
Then if ONE chooses \(B_n\) during round \(n\) against \(\tau'\),
consider when ONE chooses \(A(B_n,i)\) during round \(\theta(n,i)\)
against \(\tau\). It follows that both plays result in TWO
constructing covers of the same subspace, so since \(\tau\) is winning,
so is \(\tau'\).
\end{proof}

By duality we have the following.

\begin{corollary}
Let \(\mc B\) be the closure of \(\mc A\) under finite unions.
Then the games \(G_1(\mc O_{\mc A},\mc O)\) and
\(G_1(\mc O_{\mc B},\mc O)\) are equivalent.
\end{corollary}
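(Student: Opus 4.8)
The plan is to invoke Proposition~\ref{dualGames} twice---once for \(\mc A\) and once for \(\mc B\)---and then transport the equivalence established in the preceding Lemma across these dualities. Concretely, set \(G = G_1(\mc O_{\mc A},\mc O)\), \(G' = G_1(\mc N_{\mc A},\neg\mc O)\), \(H = G_1(\mc O_{\mc B},\mc O)\), and \(H' = G_1(\mc N_{\mc B},\neg\mc O)\). By Proposition~\ref{dualGames}, \(G\) is dual to \(G'\) and \(H\) is dual to \(H'\); by the Lemma (since \(\mc B\) is the closure of \(\mc A\) under finite unions), \(G'\) is equivalent to \(H'\). It therefore suffices to prove the purely formal statement: whenever \(G\) is dual to \(G'\), \(H\) is dual to \(H'\), and \(G'\) is equivalent to \(H'\), then \(G\) is equivalent to \(H\).

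The argument is then a diagram chase through the four clauses in the definition of equivalence, composing the relevant biconditionals. For the perfect-information clauses, \(ONE\win G \iff TWO\win G' \iff TWO\win H' \iff ONE\win H\), where the outer equivalences come from duality and the middle one from equivalence of \(G'\) and \(H'\); the clause for \(TWO\win\) is obtained symmetrically via \(TWO\win G \iff ONE\win G' \iff ONE\win H' \iff TWO\win H\). For the limited-information clauses, duality swaps ONE's predetermined strategies with TWO's Markov strategies, so \(ONE\prewin G \iff TWO\markwin G' \iff TWO\markwin H' \iff ONE\prewin H\), and dually \(TWO\markwin G \iff ONE\prewin G' \iff ONE\prewin H' \iff TWO\markwin H\). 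This exhausts the definition of equivalence.

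The only thing to be careful about is the bookkeeping: one must check that the pairings built into the definition of ``dual'' (ONE-perfect with TWO-perfect, ONE-predetermined with TWO-Markov) line up so that each composition lands back in the matching clause of ``equivalent'' (ONE-perfect with ONE-perfect, TWO-Markov with TWO-Markov, and so on). Since the four clauses of duality were chosen precisely to make this chase go through, there is no genuine obstacle here---the corollary is immediate once the two instances of Proposition~\ref{dualGames} and the Lemma are in hand.
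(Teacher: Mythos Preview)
Your argument is correct and is exactly what the paper intends: the corollary is stated immediately after the Lemma with the one-line justification ``By duality we have the following,'' which is precisely the composition of Proposition~\ref{dualGames} (applied to both \(\mc A\) and \(\mc B\)) with the Lemma that you have spelled out in detail. Your careful diagram chase through the four clauses is a faithful unpacking of that sentence.
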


In \cite[Corollary 19]{BABINKOSTOVA20071971}, Babinkostova
cites an example of Pol \cite{pol1993spaces} where
\(ONE\notprewin G_c(\mc O,\mc O)\), but
\(ONE\prewin G_1(\mc O_{\mc A},\mc O)\), where \(\mc A\)
is the collection of ``finite-dimensional'' subsets of the
space. Since this example is separable and metrizable,
finite-dimensional here is equivalent to finite unions of
zero-dimensional (per your favorite definition) subsets 
\cite[4.1.17]{engelking1978dimension}. Therefore despite
the equivalence of \(TWO\markwin G_1(\mc O_{0\dim_X},\mc O)\)
and \(TWO\markwin G_c(\mc O,\mc O)\) among \(T_1\) spaces,
we have the following.

\begin{corollary}
The games
\(G_1(\mc O_{0\dim_X},\mc O)\) and
\(G_c(\mc O,\mc O)\) are not equivalent, even for
separable metrizable spaces.
\end{corollary}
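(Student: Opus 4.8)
The plan is to string together three results already in hand: Babinkostova's report \cite[Corollary 19]{BABINKOSTOVA20071971} of Pol's example \cite{pol1993spaces}, the classical fact \cite[4.1.17]{engelking1978dimension} that in a separable metrizable space the finite-dimensional subsets are exactly the finite unions of zero-dimensional subspaces, and the Corollary just established that $G_1(\mc O_{\mc A},\mc O)$ and $G_1(\mc O_{\mc B},\mc O)$ are equivalent whenever $\mc B$ is the closure of $\mc A$ under finite unions. Since the games $G_1(\mc O_{0\dim_X},\mc O)$ and $G_c(\mc O,\mc O)$ depend on the underlying space, ``not equivalent'' will mean: there is a separable metrizable $X$ for which at least one of the four biconditionals in the definition of equivalent games fails, and the one we target is $ONE\prewin G_1(\mc O_{0\dim_X},\mc O)\iff ONE\prewin G_c(\mc O,\mc O)$.

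First I would fix the separable metrizable space $X$ of \cite{pol1993spaces} witnessing $ONE\notprewin G_c(\mc O,\mc O)$ together with $ONE\prewin G_1(\mc O_{\mc A},\mc O)$, where $\mc A$ collects the finite-dimensional subsets of $X$. The key step is to recognize $\mc A$ as the closure of the collection $0\dim_X$ of zero-dim$_X$ subsets under finite unions: by \cite[4.1.17]{engelking1978dimension} a subset of the separable metrizable space $X$ is finite-dimensional precisely when it is a finite union of zero-dimensional subspaces, and by the earlier theorem on metrizable spaces a subset of $X$ is zero-dim if and only if it is zero-dim$_X$. Composing these two facts gives exactly the identification of $\mc A$ with the finite-union closure of $0\dim_X$.

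Then I would apply the Corollary above with its ``$\mc A$'' instantiated as $0\dim_X$ and its ``$\mc B$'' as the present $\mc A$, concluding that $G_1(\mc O_{0\dim_X},\mc O)$ and $G_1(\mc O_{\mc A},\mc O)$ are equivalent; in particular $ONE\prewin G_1(\mc O_{0\dim_X},\mc O)$ since $ONE\prewin G_1(\mc O_{\mc A},\mc O)$. As $ONE\notprewin G_c(\mc O,\mc O)$ holds for this same $X$, the biconditional $ONE\prewin G_1(\mc O_{0\dim_X},\mc O)\iff ONE\prewin G_c(\mc O,\mc O)$ fails for $X$, so the two games are not equivalent, and $X$ is separable and metrizable. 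I expect the only delicate point to be the identification in the middle paragraph: one must be sure that Babinkostova's ``finite-dimensional'' coincides with the finite-union closure of the \emph{relative} notion $0\dim_X$ rather than merely the subspace notion of zero-dim, and that all the competing notions of finite-dimensional subset agree here --- which is exactly why separable metrizability is needed and where \cite[4.1.17]{engelking1978dimension} and the metrizable equivalence of zero-dim with zero-dim$_X$ are invoked. Everything else is routine unwinding of the definitions of ``equivalent'' and of a predetermined strategy for ONE, both of which apply verbatim to $G_c(\mc O,\mc O)$.
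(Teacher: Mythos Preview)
Your proposal is correct and follows essentially the same approach as the paper: Pol's example via Babinkostova, the identification of finite-dimensional subsets with finite unions of zero-dimensional subsets via \cite[4.1.17]{engelking1978dimension}, and the just-proved equivalence of \(G_1(\mc O_{\mc A},\mc O)\) with \(G_1(\mc O_{\mc B},\mc O)\) for \(\mc B\) the finite-union closure of \(\mc A\). If anything, you are slightly more explicit than the paper in invoking the metrizable equivalence of zero-dim and zero-dim\(_X\) to justify that the relevant \(\mc A\) really is the finite-union closure of \(0\dim_X\).
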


\bibliographystyle{plain}
\bibliography{bibliography}

\end{document}